\def\line#1{\hbox to \hsize{#1\hfill}}
\theoremstyle{plain}
\newtheorem{conj}{Conjecture}
\newtheorem{prop}{Proposition}
\newtheorem{lemm}{Lemma}
\newtheorem{theo}{Theorem}
\newtheorem{coro}{Corollary}
\title[A para-K\"ahler structure in the space of oriented geodesics]%
{A para-K\"ahler structure in the space of oriented geodesics in a real space form}
\begin{document} 

\author{Nikos Georgiou}
\address{Nikos Georgiou\\
  Department of Mathematics\\
          Waterford Institute of Technology\\
          Waterford\\
          Co. Waterford\\
          Ireland.}
\email{ngeorgiou@wit.ie }

\begin{abstract}
In this article, we construct a new para-K\"ahler structure $({\mathcal G},{\mathcal J},\Omega)$ in the space of oriented geodesics ${\mathbb L}(M)$ in a non-flat, real space form $M$. We first show that the para-K\"ahler metric ${\mathcal G}$ is scalar flat and when $M$ is a 3-dimensional real space form, ${\mathcal G}$ is locally conformally flat. Furthermore, we prove that the space of oriented geodesics in hyperbolic $n$-space, equipped with the constructed metric ${\mathcal G}$, is minimally isometric embedded in the tangent bundle of the hyperbolic $n$-space. We then study the submanifold theory, and we show that ${\mathcal G}$-geodesics correspond to minimal ruled surfaces in the real space form. Lagrangian submanifolds (with respect to the canonical symplectic structure $\Omega$) play an important role in the geometry of the space of oriented geodesics as they are the Gauss map of hypersurfaces in the corresponding space form.  We demonstrate that the Gauss map of a non-flat hypersurface of constant Gauss curvature is a minimal Lagrangian submanifold. Finally, we show that a Hamiltonian minimal submanifold is locally the Gauss map of a hypersurface $\Sigma$ that is a critical point of the functional $\mathcal{F}(\Sigma)=\int_{\Sigma}\sqrt{|K|}\,dV$, where $K$ denotes the Gaussian curvature of $\Sigma$.\end{abstract}

\date{23\,rd November 2019}
\maketitle
\tableofcontents
\section{Introduction}
The geometry of the space ${\mathbb L}({\mathbb S}^{n+1}(c))$ of oriented geodesics in the real space form ${\mathbb S}^{n+1}(c)$, of constant sectional curvature $c$, has been of great interest for the last two decades. In the celebrated article \cite{gk1}, Guilfoyle and Klingenberg have constructed a K\"ahler structure in the space ${\mathbb L}({\mathbb S}^{3}(0))$ of oriented lines in the Euclidean 3-space ${\mathbb S}^{3}(0)={\mathbb R}^{3}$ and they showed that the K\"ahler metric is of neutral signature. Additionally, it is invariant under the group action of the Euclidean isometry group. A similar construction for the Hyperbolic 3-space ${\mathbb S}^{3}(-1)$ was established by Georgiou and Guilfoyle in \cite{gag}. Then, in \cite{An}, Anciaux used the fact that ${\mathbb L}({\mathbb S}^{n+1}(c))$ is identified with the Grassmannian of oriented two-planes of ${\mathbb R}^{n+2}$ to extend this geometric construction for all non-flat real space forms. In particular, he showed that ${\mathbb L}({\mathbb S}^{n+1}(c))$ admits a K\"ahler or a para-K\"ahler structure, where the metric (which will be denoted here by ${\mathcal G}_e$) is Einstein and invariant under the isometry group of  ${\mathbb S}^{n+1}(c)$. In the same work, Anciaux has proved that ${\mathbb L}({\mathbb S}^{3}(c))$ admits another K\"ahler or para-K\"ahler structure, such that its metric ${\mathcal G}_0$, is of neutral signature, locally conformally flat and is invariant under the isometry group of ${\mathbb S}^{3}(c)$. The geometry derived by the metric $\mathcal{G}_0$ has been studied by several authors (see for example, \cite{An, agk, gag, gk1, Salvai}).

The invariance of the constructed (para-) K\"ahler metric in ${\mathbb L}({\mathbb S}^{n+1}(c))$ under the action of the isometric group of ${\mathbb S}^{n+1}(c)$  allows one to study geometric problems in the base manifold ${\mathbb S}^{n+1}(c)$ by studying its space of oriented geodesics. For example, the set of all oriented geodesics orthogonal (called as the Gauss map) to a hypersurface in ${\mathbb S}^{n+1}(c)$ corresponds to a Lagrangian submanifold in ${\mathbb L}({\mathbb S}^{n+1}(c))$, with respect to the canonical symplectic structure $\Omega$ (see \cite{An}). In particular, ${\mathcal G}_0$ -flat Lagrangian surfaces in ${\mathbb L}({\mathbb S}^{3}(c))$ are the Gauss map of Weingarten surfaces in ${\mathbb S}^{3}(c))$, i.e. its principal curvatures are functionally related. If ${\mathbb L}({\mathbb S}^{n+1}(c))$ is equipped with the (para-) K\"ahler Einstein structure, then every Lagrangian submanifold admits a Lagrangian angle, that is, its corresponding Maslov 1-form is closed. This is generally true for any (para-) K\"ahler Einstein manifold. It is natural then to ask whether the converse is true, i.e. considering a (para-) K\"ahler manifold  $(M^{2n},g,J,\omega)$ such that every Lagrangian submanifold has closed Maslov 1-form, can we conclude that $g$ is Einstein?

 In this article, we show that the converse is not true. In particular, we construct a para-K\"ahler (non-Einstein) structure $(\mathcal{G},\mathcal{J},\Omega)$, where $\mathcal{G},\mathcal{J}$ and $\Omega$ are respectively the metric, the paracomplex structure and the canonical symplectic structure in ${\mathbb L}({\mathbb S}^{n+1}(c))$ such that all Lagrangian submanifolds have closed Maslov 1-form.  About the metric $\mathcal{G}$ we first show the following:
 
 \vspace{0.1in}
 
 ${\bf Theorem\; 1.}$ \emph{The metric $\mathcal{G}$ is scalar flat and non-Einstein. Furthermore, it is locally conformally flat if and only if $n=2$.}

\vspace{0.1in}

It is proved in \cite{gag2} that  $({\mathbb L}({\mathbb S}^{3}(0)),{\mathcal G}_0)$ is minimally isometrically embedded in the tangent bundle  
$(T{\mathbb S}^{3}(0), \overline{\mathcal{G}})$, where $\overline{\mathcal{G}}$ is a neutral, scalar flat and locally conformally flat metric. The following theorem provides a similar result for the hyperbolic case:
 
 \vspace{0.1in}
 
 ${\bf Theorem\; 2.}$ \emph{The isometric embedding $f:({\mathbb L}({\mathbb H}^{n+1}),{\mathcal G})\longrightarrow (T{\mathbb H}^{n+1},\overline{\mathcal{G}}):x\wedge y\mapsto (x,-y)$ is minimal.}
 
 \vspace{0.1in}
 
 The reason why Theorem 2 works for any dimension, while in the Euclidean case it only works for $n=2$, is because ${\mathbb L}({\mathbb H}^{n+1})$ admits invariant (para-) K\"ahler structures for any $n$. The space ${\mathbb L}({\mathbb R}^{3})$ of oriented lines in ${\mathbb R}^{n+1}$ admits an invariant (para-) K\"ahler structure only when $n=3$ and $7$ (see \cite{Salvai}). On the other hand, there is no similar result for the spherical cases since the spheres are not Hadamard and therefore the embedding $f$ is not well-defined.
 
\vspace{0.1in}

A curve in ${\mathbb L}({\mathbb S}^{3}(c))$ corresponds to a 1-parameter family of oriented geodesics, i.e. it corresponds to a ruled surface in the real space form ${\mathbb S}^{3}(c)$. The following theorem characterises the ${\mathcal G}$-geodesics:

 \vspace{0.1in}
 
 ${\bf Theorem\; 3.}$ \emph{A curve $\gamma$ in $({\mathbb L}({\mathbb S}^{3}(c)),{\mathcal G})$ is a geodesic if and only if the corresponding ruled surface in ${\mathbb S}^{3}(c)$ is minimal.}
 
 \vspace{0.1in}
 
 A \emph{Lagrangian submanifold} of a symplectic manifold $(M^{2n},\omega)$ is a submanifold $\Sigma^n$ of half dimension whose symplectic structure $\omega$ induced on  $\Sigma^n$ vanishes. It is already known that the Gauss map of any hypersurface in ${\mathbb S}^{3}(c)$ is a Lagrangian submanifold of $({\mathbb L}({\mathbb S}^{3}(c)),\Omega)$ where $\Omega:={\mathcal G}({\mathcal J}.,.)$ is the canonical symplectic structure. The following theorem describes all Lagrangian submanifolds that are minimal:
 
 \vspace{0.1in}
 
 ${\bf Theorem\; 4.}$ \emph{Every Lagrangian submanifold in $({\mathbb L}({\mathbb S}^{n+1}(c)),\mathcal{G},\Omega)$ has a Lagrangian angle. If $\Sigma$ is a non-flat hypersurface of ${\mathbb S}^{n+1}(c)$ then it is of constant Gaussian curvature if and only if the Gauss map of $\Sigma$ is a minimal Lagrangian submanifold of $({\mathbb L}({\mathbb S}^{n+1}(c)),\mathcal{G},\Omega)$.}
 
 \vspace{0.1in}
 
A non-flat point here we mean that the product of all principal curvatures is non-zero, i.e. the Gaussian curvature is non-zero.
 Theorem 4, shows that for $n=2$, the metrics ${\mathcal G}$ and $\mathcal{G}_0$ are not isometric since they have different minimal surfaces. In particular, the only $\mathcal{G}_0$-minimal Lagrangian surfaces in ${\mathbb L}({\mathbb S}^{3}(c))$ is the Gauss map of an equidistant tube along a geodesic, which is flat (see \cite{An,geo}).
 
 Additionally, the fact that all Lagrangian submanifolds admit a Lagrangian angle, implies that the converse of the question stated before, does not hold since ${\mathcal G}$ is not Einstein.

 Let $(M, \omega)$ be a symplectic manifold and $f:\Sigma\rightarrow M$ be a Lagrangian submanifold. A vector field $X$ along $\Sigma$ is said to be {\it Hamiltonian} if  the one form $f^{\ast}(V \rfloor\omega)$ is exact. A smooth variation $(f_t)$ of $\Sigma$ into $M$ is called a {\it Hamiltonian deformation} if $\frac{df}{dt}|_{t=0}$ is a Hamiltonian vector field.   
 
If it is also given a (para-) K\"ahler structure $(J, g, \omega)$ on $M$, then a normal variation $(f_t)$ of the Lagrangian submanifold $\Sigma$ is Hamiltonian if $\frac{df}{dt}|_{t=0} =J\nabla u$, where $J$ is the (para-) complex structure and $\nabla u$ is the gradient of the smooth function $u$ defined on $\Sigma$. A \emph{Hamiltonian minimal submanifold} is a Lagrangian submanifold that is a critical point of the volume functional with respect to Hamiltonian variations. The first variation formula of the volume functional implies that a Hamiltonian minimal submanifold is characterised by the equation $div JH=0$, where $H$ denotes the mean curvature vector of $\Sigma$ and div is the divergence operator with respect to the induced metric (for more details, see \cite{Oh} and \cite{Oh1}). It is proved in \cite{glob} and \cite{Palmer} that smooth one-parameter deformations of a submanifold in ${\mathbb S}^{n+1}(c))$ induce Hamiltonian deformations of the corresponding Gauss map in $({\mathbb L}({\mathbb S}^{n+1}(c)),\Omega)$. We then have the following:

 \vspace{0.1in}
 
  ${\bf Theorem\; 5.}${\it Let $\phi:\Sigma\rightarrow {\mathbb S}^{n+1}(c)$ be a non-flat hypersurface in ${\mathbb S}^{n+1}(c)$.  Then the Gauss map $\Phi:\Sigma\rightarrow {\mathbb L}({\mathbb S}^{n+1}(c))$ is a Hamiltonian minimal submanifold of $({\mathbb L}({\mathbb S}^{n+1}(c)),\Omega)$ if and only if $\phi$ is a critical point of the functional
 \[
 \mathcal{F}(\phi)=\int_{\Sigma}\sqrt{|K|}\,dV,
 \]
 where $K$ denotes the Gaussian curvature of $\phi$.}

%%%%%%%%%%%%%%%%%%%%%%%%%%%%%%%%%%%%%%%%%%%%%%%%%%%%%%%%%%%%
%%%%%%%%%%%%%%%%%%%%%%%%%%%%%%%%%%%%%%%%%%%%%%%%%%%%%%%%%%%%%%%%%%%%%%%%%%%%%%%%%%%%%%%%%%%%%%%%%%%%%%%%%%%%%%%%%%%%%%%%%%%%%%%%%%%%%%%%%%%%%%%%%%%%%%%%%%%%%%%%%%%%%%%%%%%%%%%%%%%%%%%%%%%%%
%%%%%%%%%%%%%%%%%%%%%%%%%%%%%%%%%%%%%%%%%%%%%%%%%%%%%%%%%%%%%%%%%%%%%%%%%%%%%%%%%%%%%%%%%%%%%%%%%%%%%%%%%%%%%%%%%%%%%%%%%%%%%%%%

\section{A canonical para K\"ahler structure}

Let ${\mathbb S}^{n+1}(c)$ be the real space form of constant sectional curvature $c\in\{-1,1\}$. Then, let ${\mathbb H}^{n+1}={\mathbb S}^{n+1}(-1)$ be the hyperbolic $(n+1)$-dimensional space defined by:
\[
{\mathbb H}^{n+1}=\{x\in {\mathbb R}^{n+2}\,|\, \left<x,x\right>_{-1}=1,\, x_0>0\},
\]
where $\left<x,x\right>_{-1}:=x_0^2-x_1^2-\ldots-x_{n+1}^2$ and ${\mathbb S}^{n+1}={\mathbb S}^{n+1}(1)$ be the $(n+1)$-dimensional sphere defined by:
\[
{\mathbb S}^{n+1}=\{x\in {\mathbb R}^{n+2}\,|\, \left<x,x\right>_{1}=1\},
\]
where $\left<x,x\right>_{1}:=x_0^2+x_1^2+\ldots+x_{n+1}^2$. Let $g$ be the flat metric $\left<.,.\right>_c$ induced in ${\mathbb S}^{n+1}(c)$ via the inclusion map.

The space of oriented geodesics in ${\mathbb S}^{n+1}(c)$ is identified with the Grassmannian of oriented two-planes of ${\mathbb R}^{n+2}$, i.e.,
\[
{\mathbb L}({\mathbb S}^{n+1}(c))=\{x\wedge y\in \Lambda^2({\mathbb R}^{n+2})\,|\, y\in T_x{\mathbb S}^{n+1}(c),\, \left<y,y\right>_c=c\}.
\] 
Every tangent vector in $T_{x\wedge y}{\mathbb L}({\mathbb S}^{n+1}(c))$ can be written as:
\[
x\wedge X+y\wedge Y,
\]
where $X,Y\in (x\wedge y)^{\bot}$ are in ${\mathbb R}^{n+2}$. It is known that ${\mathbb L}({\mathbb S}^{n+1}(c))$ is equipped with the Einstein metric $\mathcal{G}_e=\iota^{\ast}\left<\left<.,.\right>\right>_c$ where,
\[
\iota:{\mathbb L}({\mathbb H}^n)\xhookrightarrow{} \Lambda^2({\mathbb R}^{n+1}):x\wedge y\mapsto x\wedge y,
\]
and $\left<\left<.,.\right>\right>_c$ is the flat metric in $\Lambda^2({\mathbb R}^{n+1})$:
\[
\left<\left<x_1\wedge y_1,x_2\wedge y_2\right>\right>_c=\left<x_1,x_2\right>_c\left< y_1, y_2\right>_c-\left<x_1,y_2\right>_c\left<x_2,y_1\right>_c.
\]

\vspace{0.1in}

For the hyperbolic case ($c=-1$), fixing a point $p\in {\mathbb R}^{n+2}$, every oriented geodesic $\gamma=\gamma(t)$, with $t$ being its arc-length, can be identified with the pair $(\gamma(t_0),\gamma'(t_0))$, where $\gamma(t_0)$ is the closest point of $\gamma$ to $p$ and, $\gamma'(t_0)$ is its velocity. When $p$ is the origin, it is not hard to see that $\left<\gamma(t_0),\gamma'(t_0)\right>_1=0$.

\vspace{0.1in}

In this article, when we write the oriented geodesic $\gamma$ as the oriented plane $x\wedge y$ we mean that $\left< x, y\right>_1=0$.

\begin{prop}
The following embedding is well defined:
\begin{equation}\label{e:embedding}
f:{\mathbb L}({\mathbb H}^{n+1})\longrightarrow T{\mathbb H}^{n+1}:x\wedge y\mapsto (x,-y).
\end{equation}
\end{prop}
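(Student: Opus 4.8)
The plan is to verify the two ingredients hidden in the phrase \emph{well defined}: that the stated convention $\langle x,y\rangle_1=0$ selects a \emph{unique} pair $(x,y)$ representing each oriented geodesic, so that the value $(x,-y)$ is independent of the chosen representative, and that this pair genuinely lies in $T\mathbb{H}^{n+1}$. The second point is immediate once the first is settled, so the substance of the proof is the existence and uniqueness of the distinguished representative.

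First I would fix $\gamma=x_0\wedge y_0\in\mathbb{L}(\mathbb{H}^{n+1})$, so that $\langle x_0,x_0\rangle_{-1}=1$ with $x_0$ on the upper sheet, $\langle y_0,y_0\rangle_{-1}=-1$ and $\langle x_0,y_0\rangle_{-1}=0$, and consider the one-parameter family of boosts
\[
x(t)=\cosh t\,x_0+\sinh t\,y_0,\qquad y(t)=\sinh t\,x_0+\cosh t\,y_0.
\]
A direct computation gives $x(t)\wedge y(t)=x_0\wedge y_0$ for every $t$ and shows that the three Lorentzian relations are preserved, so each $(x(t),y(t))$ is again an admissible representative of $\gamma$. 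Recall that the pair representing a simple $2$-vector is determined only up to $SL(2,\mathbb{R})$; the normalizations $\langle x,x\rangle_{-1}=1$, $\langle y,y\rangle_{-1}=-1$, $\langle x,y\rangle_{-1}=0$ force the change of basis to preserve the form $\mathrm{diag}(1,-1)$, hence to lie in $SO(1,1)$, and the upper-sheet condition rules out the non-identity component. Thus the boosts above exhaust the entire ambiguity in writing $\gamma=x\wedge y$.

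The crux is then to show that exactly one $t$ achieves the Euclidean orthogonality $\langle x(t),y(t)\rangle_1=0$. Expanding gives
\[
\langle x(t),y(t)\rangle_1=\tfrac12\sinh(2t)\bigl(\langle x_0,x_0\rangle_1+\langle y_0,y_0\rangle_1\bigr)+\cosh(2t)\,\langle x_0,y_0\rangle_1,
\]
so the condition is equivalent to $\tanh(2t)=-2\langle x_0,y_0\rangle_1/\bigl(\langle x_0,x_0\rangle_1+\langle y_0,y_0\rangle_1\bigr)$. Since $x_0$ and $y_0$ are linearly independent, the strict Cauchy--Schwarz inequality together with the arithmetic--geometric mean inequality yields $2|\langle x_0,y_0\rangle_1|<\langle x_0,x_0\rangle_1+\langle y_0,y_0\rangle_1$; hence the right-hand side lies in $(-1,1)$ and there is a unique solution $t$. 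The geometric meaning is worth recording: because $x'(t)=y(t)$, one has $\langle x(t),y(t)\rangle_1=\tfrac12\frac{d}{dt}\langle x(t),x(t)\rangle_1$, so the distinguished representative is exactly the point of $\gamma$ minimizing the ambient Euclidean distance $\langle x(t),x(t)\rangle_1$ to the origin, a minimum that exists and is unique because this quantity tends to $+\infty$ as $|t|\to\infty$.

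With the unique representative fixed, the image $(x,-y)$ satisfies $x\in\mathbb{H}^{n+1}$ by construction, while $\langle x,-y\rangle_{-1}=-\langle x,y\rangle_{-1}=0$ gives $-y\in T_x\mathbb{H}^{n+1}$; hence $(x,-y)\in T\mathbb{H}^{n+1}$ and $f$ is well defined. I expect the only genuine obstacle to be this uniqueness step — the strict inequality ensuring that a single boost realizes $\langle x,y\rangle_1=0$; identifying the boost action and checking invariance of the Lorentzian constraints are routine verifications.
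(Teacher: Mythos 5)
Your proof is correct, and it takes a somewhat different and in fact more complete route than the paper's. The paper's proof is purely a uniqueness check: it takes two representatives $x\wedge y=z\wedge w$ \emph{both} already satisfying the convention $\langle\cdot,\cdot\rangle_1=0$, relates them by a boost, and uses the normalizations (namely $\langle x,y\rangle_1=0$ forces $y_0=0$, whence $\langle y,y\rangle_1=1$) to reduce the condition $\langle z,w\rangle_1=0$ to $(|x|_1^2+1)\sinh t\cosh t=0$, so $t=0$. You instead solve the orthogonality equation across the entire boost family, obtaining $\tanh(2t)=-2\langle x_0,y_0\rangle_1/\bigl(\langle x_0,x_0\rangle_1+\langle y_0,y_0\rangle_1\bigr)$ and invoking strict Cauchy--Schwarz plus AM--GM to place the right-hand side in $(-1,1)$; this yields existence \emph{and} uniqueness of the distinguished representative in one stroke. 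That buys two things the paper handles only informally: existence of a representative with $\langle x,y\rangle_1=0$ (the paper relegates this to the preceding ``closest point to the origin'' remark, which your observation $\langle x(t),y(t)\rangle_1=\tfrac12\tfrac{d}{dt}\langle x(t),x(t)\rangle_1$ makes rigorous), and the justification that boosts exhaust the ambiguity in writing $\gamma=x\wedge y$ (your $SL(2,\mathbb{R})\cap O(1,1)=SO(1,1)$ argument, with the upper-sheet condition excluding the non-identity component), a step the paper simply asserts when it writes its equation for $z$ and $w$. The paper's argument is shorter because it exploits the convention to simplify the coefficient to $|x|_1^2+1>0$ before concluding; yours is self-contained and shows $f$ is defined on all of ${\mathbb L}({\mathbb H}^{n+1})$, not merely single-valued.
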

\begin{proof}
Indeed, let $z\wedge w\in {\mathbb L}({\mathbb H}^{n+1})$ be such that $z\wedge w=x\wedge y$, where $\left< x, y\right>_1=\left< z, w\right>_1=0$. Then 
\begin{equation}\label{e:equalityzandw}
z=x\cosh t+y\sinh t,\qquad w=x\sinh t+y\cosh t,
\end{equation}
for some real $t$. Note that $\left<x,y\right>_1=0$ and thus we have that $y_0=0$. The fact that $\left<y,y\right>_{-1}=-1$ implies $\left<y,y\right>_1=1$. 

From $\left<z,w\right>_1=0$, we then have,
\[
(|x|_1^2+|y|_1^2)\sinh t\cosh t+\left<x,y\right>_1(\cosh^2t+\sinh^2t)=0,
\]
which yields,
\[
(|x|_1^2+1)\sinh t\cosh t=0, 
\]
Thus, $t=0$ and substituting this in (\ref{e:equalityzandw}), we finally get $(x,-y)=(z,-w)$, which means that $f(x\wedge y)=f(z\wedge w)$.
\end{proof}

\vspace{0.1in}

We now use the embedding $f$ to define a new geometric structure on ${\mathbb L}({\mathbb H}^{n+1})$. To do this, consider the neutral metric $\overline{\mathcal{G}}$ on $T{\mathbb H}^{n+1}$:
\[
\overline{\mathcal{G}}(\bar X,\bar Y)=g(\Pi\bar X,K\bar Y)+g(K\bar X,\Pi\bar Y),
\]
where $\bar X\simeq (\Pi\bar X,K\bar X)$, $\bar Y\simeq (\Pi\bar Y,K\bar Y)$ in $TT{\mathbb H}^{n+1}=T{\mathbb H}^{n+1}\oplus T{\mathbb H}^{n+1}$, and $g$ is the metric $\left<.,.\right>_{-1}$ induced by the inclusion map $i:{\mathbb H}^{n+1}\hookrightarrow {\mathbb R}^{n+2}$. For more details about this metric, see \cite{gag2} and \cite{gudkappos}.

Let $\mathcal{G}$ be the metric $\overline{\mathcal{G}}$ induced by $f$ on ${\mathbb L}({\mathbb H}^{n+1})$, i.e. $\mathcal{G}=f^{\ast}\overline{\mathcal{G}}$. It can be shown that 
\begin{equation}\label{e:definofmetric}
\mathcal{G}(x\wedge X_1+y\wedge Y_1,x\wedge X_2+y\wedge Y_2)=g(X_1,Y_2)+g(X_2,Y_1).
\end{equation}

\begin{prop}
The metrics $\mathcal{G}$ and $\mathcal{G}_e$ share the same Levi-Civita connection.
\end{prop}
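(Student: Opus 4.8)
The plan is to prove the stronger assertion that $\mathcal{G}$ is parallel for the Levi-Civita connection $\nabla^{e}$ of $\mathcal{G}_e$; since $\nabla^{e}$ is torsion-free, the uniqueness of the Levi-Civita connection then identifies $\nabla^{e}$ with that of $\mathcal{G}$ as well. It is convenient to encode both metrics through the $(1,1)$-tensor field $\mathcal{A}$ determined by $\mathcal{G}(\cdot,\cdot)=\mathcal{G}_e(\mathcal{A}\cdot,\cdot)$. Restricting $\langle\langle\cdot,\cdot\rangle\rangle_{-1}$ to $T\mathbb{L}(\mathbb{H}^{n+1})$ and using $\langle x,x\rangle_{-1}=1$, $\langle y,y\rangle_{-1}=-1$, $\langle x,y\rangle_{-1}=0$ gives $\mathcal{G}_e(x\wedge X_1+y\wedge Y_1,x\wedge X_2+y\wedge Y_2)=g(X_1,X_2)-g(Y_1,Y_2)$; comparing with (\ref{e:definofmetric}) one reads off $\mathcal{A}(x\wedge X+y\wedge Y)=x\wedge Y-y\wedge X$. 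Because $\nabla^{e}\mathcal{G}_e=0$, a direct expansion gives $(\nabla^{e}_U\mathcal{G})(V,W)=\mathcal{G}_e\big((\nabla^{e}_U\mathcal{A})V,W\big)$, so the whole proposition reduces to the single identity $\nabla^{e}\mathcal{A}=0$.

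To make $\nabla^{e}$ computable I would exploit that $\mathcal{G}_e$ is the metric induced by the embedding $\iota$ into the flat space $(\Lambda^2(\mathbb{R}^{n+2}),\langle\langle\cdot,\cdot\rangle\rangle_{-1})$, so that $\nabla^{e}_U V=(D_U V)^{\top}$, where $D$ is the flat ambient connection and $(\cdot)^{\top}$ is orthogonal projection onto $T\mathbb{L}(\mathbb{H}^{n+1})$. The normal space at $x\wedge y$ is spanned by $x\wedge y$ together with $\Lambda^2\big((x\wedge y)^{\perp}\big)$, so the projection discards precisely these two kinds of bivectors. I would then differentiate the basic fields $x\wedge X$ and $y\wedge Y$: writing $D_U x$ and $D_U y$ in terms of the components of $U=x\wedge X_U+y\wedge Y_U$, decomposing $D_U X$ into its $(x\wedge y)^{\perp}$-part and its $x,y$-parts (forced by differentiating $\langle X,x\rangle_{-1}=\langle X,y\rangle_{-1}=0$) and projecting, yields $\nabla^{e}_U(x\wedge X)=x\wedge(D_U X)^{\perp}+c\,y\wedge X$ and, symmetrically, $\nabla^{e}_U(y\wedge Y)=y\wedge(D_U Y)^{\perp}+c\,x\wedge Y$ for a scalar coefficient $c=c(U)$. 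Substituting these into $\nabla^{e}_U(\mathcal{A}V)-\mathcal{A}\,\nabla^{e}_U V$, the terms carrying $(D_U X)^{\perp}$ and $(D_U Y)^{\perp}$ cancel in pairs, and $\nabla^{e}\mathcal{A}=0$ becomes equivalent to the vanishing of the remaining $c$-terms.

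The crux, and the step I expect to be the main obstacle, is the careful treatment of the moving frame $\{x,y,e_1,\dots,e_n\}$ — in particular the dependence of the distinguished representative on the normalization $\langle x,y\rangle_1=0$ fixed in Proposition 1. Differentiating that constraint along a curve in $\mathbb{L}(\mathbb{H}^{n+1})$ is exactly what determines the coefficient $c(U)$ through $D_U x$ and $D_U y$, and the identity $\nabla^{e}\mathcal{A}=0$ hinges on this quantity dropping out after projection. I would organize the bookkeeping using the Einstein para-K\"ahler structure, which supplies a $\nabla^{e}$-parallel para-complex structure $J_e$ with $J_e(x\wedge X+y\wedge Y)=x\wedge Y+y\wedge X$: relative to $J_e$ the tensor $\mathcal{A}$ is algebraically elementary, so $\nabla^{e}J_e=0$ collapses the verification to a single scalar relation among the connection coefficients of the frame, to be settled from the structure equations of $\mathbb{H}^{n+1}\hookrightarrow\mathbb{R}^{n+2}$. (As a cleaner alternative, if one first checks that $\mathcal{G}$ is invariant under the isometry group of $\mathbb{H}^{n+1}$, then both metrics are invariant on the symmetric space $\mathbb{L}(\mathbb{H}^{n+1})$ and automatically share its canonical connection; the delicate point there is precisely establishing that invariance, given that $f$ is defined via the base-point–dependent normalization.)
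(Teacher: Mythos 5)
Your overall architecture coincides with the paper's: your $\mathcal{A}$ is exactly the tensor the paper calls $\mathcal{J}_e$ (for $c=-1$), and your reduction via uniqueness of the Levi-Civita connection is precisely the lemma from \cite{An} that the paper invokes, so everything hinges on the single identity $\nabla^{e}\mathcal{A}=0$. That is exactly where your proposal stops: you flag the coefficient $c(U)$ coming from the normalization $\langle x,y\rangle_1=0$ as ``the main obstacle'' and express the hope that it drops out, but you never verify this — and carried out with your own (correct) derivative formulas, it does \emph{not} drop out. Indeed,
\begin{align*}
(\nabla^{e}_U\mathcal{A})(x\wedge X)&=\nabla^{e}_U\bigl(-y\wedge X\bigr)-\mathcal{A}\bigl(x\wedge (D_UX)^{\perp}+c\,y\wedge X\bigr)\\
&=\bigl(-y\wedge (D_UX)^{\perp}-c\,x\wedge X\bigr)+\bigl(y\wedge (D_UX)^{\perp}-c\,x\wedge X\bigr)=-2c(U)\,x\wedge X:
\end{align*}
the $(D_UX)^{\perp}$-terms cancel in pairs, as you predicted, but the two $c$-terms carry the \emph{same} sign and add. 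The relative minus sign inside $\mathcal{A}(x\wedge X+y\wedge Y)=x\wedge Y-y\wedge X$ is forced on you by matching (\ref{e:definofmetric}), and it is exactly the sign that destroys the cancellation; the endomorphism for which the $c$-terms do cancel is $x\wedge X+y\wedge Y\mapsto x\wedge Y+y\wedge X$, which is the representative-independent para-complex structure — but that one is $\mathcal{G}_e$-skew and yields (up to sign) the symplectic form $\Omega$, not the symmetric tensor $\mathcal{G}$.

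Nor can you hope that the gauge $\langle x,y\rangle_1=0$ forces $c(U)\equiv 0$: in $\mathbb{L}(\mathbb{H}^2)$ the curve $x(t)=(\cosh r,\sinh r\cos t,\sinh r\sin t)$, $y(t)=(0,-\sin t,\cos t)$ satisfies $\langle x,y\rangle_1=0$ identically, yet $\dot x=\sinh r\,y$, so $c(\dot\gamma)=\sinh r\neq 0$. In the resulting coordinates $(s,\theta)$ one computes $\mathcal{G}_e=ds^2-\cosh^2\!s\,d\theta^2$ while $\mathcal{G}=-2\cosh s\,ds\,d\theta$, whose Christoffel symbols visibly differ (e.g. $\Gamma^{\theta}_{s\theta}=\tanh s$ for $\mathcal{G}_e$ versus $0$ for $\mathcal{G}$). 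So your plan, executed honestly, establishes $\nabla^{e}\mathcal{A}\neq 0$ along such curves rather than $\nabla^{e}\mathcal{A}=0$. You have in effect isolated the step the paper itself leaves unproved — its bare assertions $\mathcal{D}_{E_i}E_j=0$ and $\mathcal{D}_{E_i}\mathcal{J}_e=\mathcal{J}_e\mathcal{D}_{E_i}$ tacitly assume the gauge terms you call $c(U)$ are absent, which the computation above contradicts. Your fallback via invariance does not repair this: formula (\ref{e:definofmetric}) is not representative-independent (under $x\mapsto x\cosh u+y\sinh u$, $y\mapsto x\sinh u+y\cosh u$ it picks up cross-terms proportional to $\sinh u\cosh u\,\bigl(g(X_1,X_2)+g(Y_1,Y_2)\bigr)$), so establishing invariance of $\mathcal{G}$ under the full isometry group faces the identical obstruction. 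The missing step is thus not a deferrable technicality; it is the entire content of the proposition, and your setup — more careful than the paper's — actually makes the failure of the intended cancellation explicit.
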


\begin{proof}
Let $x,y,e_1,\ldots e_{n}$ be an orthonormal frame of ${\mathbb R}^{n+2}$, and define the vector $E_1,\ldots E_{2n}$ in $T_{x\wedge y}{\mathbb L}({\mathbb H}^{n+1})$ by:
\[
E_i=x\wedge e_i,\qquad E_{n+i}=y\wedge e_i,
\]
where $i=1,\ldots, n$. If $\mathcal{D}$ is the Levi-Civita connection of $\mathcal{G}_e$, one can show that $\mathcal{D}_{E_i}E_j=0$.

An almost complex structure $\mathcal{J}_e$ in ${\mathbb L}({\mathbb H}^{n+1})$ can be defined by
\[
\mathcal{J}_e(x\wedge X+y\wedge Y)=-y\wedge X+x\wedge Y.
\]
Then $\mathcal{D}_{E_i}\mathcal{J}_e  = \mathcal{J}_e\mathcal{D}_{E_i}$, which shows that $\mathcal{J}_e$ is $\mathcal{D}$-parallel and therefore integrable. We also have that $\mathcal{J}_e$ is symmetric with respect to $G_0$, i.e.
\[
\mathcal{G}_e(\mathcal{J}_e\bar X,\bar Y)=\mathcal{G}_e(\bar X,\mathcal{J}_e\bar Y),
\]
for any $\bar X,\bar Y\in T_{x\wedge y}{\mathbb L}({\mathbb H}^{n+1})$. Namely,
\begin{eqnarray}
\mathcal{G}_e(\mathcal{J}_e(x\wedge X_1+y\wedge Y_1),x\wedge X_2+y\wedge Y_2)&=&\mathcal{G}_e(-y\wedge X_1+x\wedge Y_1,x\wedge X_2+y\wedge Y_2)\nonumber\\
&=& g(X_1,Y_2)+g(X_2,Y_1),\label{e:themetric}
\end{eqnarray}
which implies 
\[
\mathcal{G}_e(\mathcal{J}_e(x\wedge X_1+y\wedge Y_1),x\wedge X_2+y\wedge Y_2)=\mathcal{G}_e(x\wedge X_1+y\wedge Y_1,\mathcal{J}_e(x\wedge X_2+y\wedge Y_2)).
\]
Consider the following Lemma:
\begin{lemm}\cite{An}
Let $(N,G)$ be a pseudo-Riemannian manifold with Levi-Civita connection $D$ and $T$ a symmetric, $D$-parallel $(1,1)$ tensor.Then the Levi-Civita connection of the pseudo-Riemannian metric $G'=G(.,T.)$ is $D$.
\end{lemm}
From (\ref{e:themetric}), we have 
\[
\mathcal{G}=\mathcal{G}_e(.,\mathcal{J}_e.),
\]
and the proposition then follows.
\end{proof}

\vspace{0.1in}

Considering the $(n+1)$- dimensional real space form ${\mathbb S}^{n+1}(c)$ and defining the almost (para-)complex structure $\mathcal{J}_e$ by
\[
\mathcal{J}_e(x\wedge X+y\wedge Y)=cy\wedge X+x\wedge Y,
\] 
we define the metric $\mathcal{G}$ by
\[
\mathcal{G}=\mathcal{G}_e(.,\mathcal{J}_e.),
\]
which is given by (\ref{e:definofmetric}). It is easily shown that, $\mathcal{G}$ is $\mathcal{D}$ and $\mathcal{J}_e$ symmetric and therefore $\mathcal{G}$ and $\mathcal{G}_e$ share the same Levi-Civita connection.   

The following theorem explores the curvature of $\mathcal{G}$:

\begin{theo}
The metric $\mathcal{G}$ is scalar flat and non-Einstein. Furthermore, it is locally conformally flat if and only if $n=2$.
\end{theo}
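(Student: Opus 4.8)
The plan is to lean on the fact, already established above, that $\mathcal{G}$ and $\mathcal{G}_e$ have the same Levi-Civita connection $\mathcal{D}$. Two metrics sharing a connection share the same $(3,1)$-curvature tensor $R$, and therefore the same Ricci tensor regarded as a $(0,2)$-tensor, because $\mathrm{Ric}(Y,Z)=\mathrm{tr}\big(X\mapsto R(X,Y)Z\big)$ is manufactured from $R$ and $\mathcal{D}$ alone and never refers to a metric. Since $\mathcal{G}_e$ is Einstein, I may write $\mathrm{Ric}=\lambda\,\mathcal{G}_e$ with $\lambda$ constant, where $\lambda\neq 0$ because $\mathbb{L}(\mathbb{S}^{n+1}(c))$ is a non-flat symmetric space (its scalar curvature with respect to $\mathcal{G}_e$ is $2n\lambda$).

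From here scalar-flatness is immediate. Tracing with $\mathcal{G}$ gives $\mathrm{Scal}_{\mathcal{G}}=\mathrm{tr}_{\mathcal{G}}\mathrm{Ric}=\lambda\,\mathrm{tr}_{\mathcal{G}}\mathcal{G}_e$, and the defining relation $\mathcal{G}=\mathcal{G}_e(\,\cdot\,,\mathcal{J}_e\,\cdot\,)$ reads $\mathcal{G}=\mathcal{G}_e\mathcal{J}_e$ in matrix form, so that $\mathrm{tr}_{\mathcal{G}}\mathcal{G}_e=\mathrm{tr}(\mathcal{G}^{-1}\mathcal{G}_e)=\mathrm{tr}(\mathcal{J}_e^{-1})$. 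In the frame $E_i=x\wedge e_i$, $E_{n+i}=y\wedge e_i$ the endomorphism $\mathcal{J}_e$ interchanges the two $n$-dimensional blocks and so is trace-free; hence $\mathrm{Scal}_{\mathcal{G}}=0$. The same structure rules out the Einstein condition: were $\mathrm{Ric}=\mu\,\mathcal{G}$, then $\lambda\,\mathcal{G}_e=\mu\,\mathcal{G}_e(\,\cdot\,,\mathcal{J}_e\,\cdot\,)$, i.e. $\lambda\,\mathrm{Id}=\mu\,\mathcal{J}_e$ as fields of endomorphisms, which is absurd since $\lambda\neq 0$ and $\mathcal{J}_e$ is not a scalar multiple of the identity.

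The heart of the theorem is the conformal-flatness dichotomy, where $m:=\dim\mathbb{L}(\mathbb{S}^{n+1}(c))=2n$. I would compute the $(0,4)$-curvature of $\mathcal{G}$ directly in the frame $\{E_i,E_{n+i}\}$ from $R_{\mathcal{G}}(X,Y,Z,W)=\mathcal{G}(R(X,Y)Z,W)=\mathcal{G}_e(R(X,Y)Z,\mathcal{J}_e W)$, reading the components of the $(3,1)$-tensor $R$ off the symmetric-space curvature of $\mathcal{G}_e$. Because $\mathrm{Scal}_{\mathcal{G}}=0$, the Weyl tensor collapses to $W=R_{\mathcal{G}}-\tfrac{1}{m-2}\,P$, where $P$ is the Kulkarni--Nomizu product of $\mathrm{Ric}=\lambda\mathcal{G}_e$ with $\mathcal{G}$; for $m\geq 4$, that is $n\geq 2$, vanishing of $W$ is equivalent to local conformal flatness. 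The plan is then to evaluate all components of $R_{\mathcal{G}}$ and of $\tfrac{1}{m-2}P$ on the frame and compare them: since both are built entirely from Kronecker deltas, every component of $W$ reduces to a product of deltas times a single coefficient that is a rational function of $n$. I expect this coefficient to vanish exactly at $n=2$, giving $W\equiv 0$ there, while for $n\geq 3$ a single surviving component (for instance one of mixed block type, with three entries in one block and one in the other) certifies $W\neq 0$.

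The main obstacle is precisely this bookkeeping: keeping the $\mathcal{J}_e$-twist in $R_{\mathcal{G}}$ straight across the several index patterns (both entries in the $x$-block, both in the $y$-block, and the mixed patterns), and verifying that the many apparently independent components of $W$ all organise into the same $n$-dependent factor. A convenient safeguard is to exploit the symmetries forced by the $\mathcal{D}$-parallel structure, namely $R(X,Y)\mathcal{J}_e=\mathcal{J}_e R(X,Y)$, which cut down the number of genuinely distinct components one must compute and make the final comparison with the Kulkarni--Nomizu term transparent; one may also cross-check the whole computation against the known value $\mathrm{Ric}=\lambda\mathcal{G}_e$ by tracing $R_{\mathcal{G}}$ back down.
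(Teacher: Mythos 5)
Your treatment of the scalar-flat and non-Einstein claims is correct and in substance the paper's own route, executed somewhat more cleanly: where the paper verifies $\mathcal{R}ic = Ric$ by a frame computation, you observe directly that the Ricci $(0,2)$-tensor is manufactured from the connection alone, and your trace identity $\mathrm{tr}_{\mathcal{G}}\,\mathcal{G}_e = \mathrm{tr}(\mathcal{J}_e^{-1}) = c\,\mathrm{tr}(\mathcal{J}_e) = 0$ replaces the paper's component-by-component evaluation of $\mathcal{S}=\mathcal{G}^{ab}\mathcal{R}ic_{ab}$ against the off-diagonal inverse metric; likewise your endomorphism argument ($\lambda\,\mathrm{Id}=\mu\,\mathcal{J}_e$ is absurd) makes explicit what the paper leaves terse. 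One small repair: your justification that $\lambda\neq 0$ ``because the space is a non-flat symmetric space'' is not valid in indefinite signature --- there exist Ricci-flat, non-flat pseudo-Riemannian symmetric spaces (e.g.\ of Cahen--Wallach type) --- so you should instead quote the known value $S=2cn^2$, i.e.\ $\lambda=cn$, as the paper does by citing Anciaux.

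The genuine gap is the conformal-flatness dichotomy, which you yourself call the heart of the theorem: your text there is a plan, not a proof (``I would compute\dots'', ``I expect this coefficient to vanish exactly at $n=2$''). Nothing in the proposal pins down the curvature of $\mathcal{G}_e$ on the frame $\{E_i,E_{n+i}\}$, so neither direction of the equivalence is established: you exhibit no nonvanishing Weyl component for $n\geq 3$, and you verify nothing at $n=2$. The missing step is short once the symmetric-space curvature is in hand: with $\mathcal{S}=0$ the Weyl tensor is $\mathcal{W}=\mathcal{G}(\mathcal{R}(\cdot,\cdot)\cdot,\cdot)-\tfrac{1}{2(n-1)}\,\mathcal{R}ic\circ\mathcal{G}$, and using $\mathcal{G}_e(R(E_1,E_2)E_2,E_1)=1$ together with $\mathcal{J}_eE_{n+1}=E_1$ one obtains, in the paper's normalizations,
\[
\mathcal{W}(E_1,E_2,E_2,E_{n+1})=1-\frac{n}{2n-2},
\]
which is nonzero precisely when $n\neq 2$. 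So your guess that a mixed component with three entries in one block and one in the other would certify non-flatness is exactly right --- but guessing it is not proving it, and the value $1-\tfrac{n}{2n-2}$ cannot be inferred from the structure of your argument without actually evaluating the curvature. For the converse direction ($n=2$ implies $\mathcal{W}\equiv 0$) one must still check the remaining index patterns; here the paper itself is terse (``similarly for the other coefficients''), and your proposed use of $R(X,Y)\mathcal{J}_e=\mathcal{J}_e R(X,Y)$ to cut down the cases is a sensible way to organize that check --- but as the proposal stands, the statement ``locally conformally flat iff $n=2$'' is asserted, not proved.
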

\begin{proof}
Consider the frame $E_i$ used previously, where again $i=1,\ldots, n$, then $\mathcal{J}_eE_i=cE_{n+i}$ and $\mathcal{J}_eE_{n+i}=E_{i}$. Let $\mathcal{R}$ and $\mathcal{R}ic$ be the Riemann curvature and Ricci tensor respectively of $\mathcal{G}$. Since the metrics $\mathcal{G}$ and $\mathcal{G}_e$ have the same Levi-Civita connection then $\mathcal{R}=R$, where $R$ is the Riemann curvature tensor of $\mathcal{G}_e$. Then,
\begin{eqnarray}
\mathcal{G}(\mathcal{R}(.,.).,.)&=&\mathcal{G}_e(\mathcal{R}(.,.).,\mathcal{J}_e.)\nonumber \\
&=&\mathcal{G}_e(R(.,.).,\mathcal{J}_e.)\nonumber 
\end{eqnarray}
For $i,j=1,\ldots,n$ we have,
\[
\mathcal{G}_{i,n+j}=c\delta_{ij},\quad \mathcal{G}_{ij}=\mathcal{G}_{n+i,n+j}=0,
\]
and therefore the inverse matrix has coefficients
\[
\mathcal{G}^{i,n+j}=c\delta_{ij},\quad \mathcal{G}^{ij}=\mathcal{G}^{n+i,n+j}=0.
\]
Using the fact that $\mathcal{G}_e^{ij}=c\mathcal{G}_e^{n+i,n+j}=\delta_{ij}$ and $\mathcal{G}_e^{i,n+j}=0$, we then have
\begin{eqnarray}
\mathcal{R}ic(X,Y)&=&\sum_{i=1}^{n}\mathcal{G}^{i,n+i}\left(\mathcal{G}(\mathcal{R}(X,E_i)Y,E_{n+i})+\mathcal{G}(\mathcal{R}(X,E_{n+i})Y,E_{i})\right)\nonumber \\
&=&\sum_{i=1}^{n}\left(\mathcal{G}_e(R(X,E_i)Y,\mathcal{J}_eE_{n+i})+\mathcal{G}_e(R(X,E_{n+i})Y,\mathcal{J}_eE_{i})\right)\nonumber \\
&=&\sum_{i=1}^{n}\Big(\left<\left<R(X,E_i)Y,E_{i}\right>\right>_c+c\left<\left<R(X,E_{n+i})Y,E_{n+i}\right>\right>_c\Big)\nonumber \\
&=&\sum_{i=1}^{n}\Big(\mathcal{G}_e^{ii}\left<\left<R(X,E_i)Y,E_{i}\right>\right>_c+\mathcal{G}_e^{n+i,n+i}\left<\left<R(X,E_{n+i})Y,E_{n+i}\right>\right>_c\Big)\nonumber \\
&=& Ric(X,Y),\nonumber
\end{eqnarray}
where $Ric$ is the Ricci tensor of $\mathcal{G}_e$.

Now, $\mathcal{G}_e$ is an Einstein metric with scalar curvature $S=2cn^2$ (for more details, see \cite{An}). Then
\[
Ric=\frac{S}{2n}\mathcal{G}_e=cn\,\mathcal{G}_e.
\]
That means,
\[
\mathcal{R}ic(X,Y)=cn\,\mathcal{G}_e(X,Y)
\]
which implies,
\[
\mathcal{R}ic(X,Y)=cn\,\left<\left<X,Y\right>\right>_c,
\]
and thus, $\mathcal{G}$ is non-Einstein. 

If $\mathcal{S}$ denotes the scalar curvature of $\mathcal{G}$ then,
\begin{eqnarray}
\mathcal{S}&=&\sum_{a,b=1}^{2n}\mathcal{G}^{ab}\mathcal{R}ic(E_a,E_b)\nonumber \\
&=&2\sum_{i=1}^{n}\mathcal{G}^{i,n+i}\mathcal{R}ic(E_{i},E_{n+i})\nonumber\\
&=&2c^2n\sum_{i}^{n}\left<\left<E_{i},E_{n+i}\right>\right>_c\nonumber\\
&=&0.\nonumber
\end{eqnarray}

We now proceed with the proof of the second part of the theorem. Since $\mathcal{G}$ is scalar flat, the Weyl tensor $\mathcal{W}$ is given by
\begin{eqnarray}
\mathcal{W}(X,Y,Z,W)&=&\mathcal{G}(\mathcal{R}(X,Y)Z,W)-\frac{1}{2(n-1)}\mathcal{R}ic\circ \mathcal{G}(X,Y,Z,W)\nonumber\\
&=&\mathcal{G}_e(R(X,Y)Z,\mathcal{J}_eW)-\frac{1}{2(n-1)}Ric\circ \mathcal{G}(X,Y,Z,W)\nonumber\\
&=&\mathcal{G}_e(R(X,Y)Z,\mathcal{J}_eW)-\frac{cn}{2n-2}\,\mathcal{G}_e\circ \mathcal{G}(X,Y,Z,W),\nonumber
\end{eqnarray}
where $\circ$ denotes the Kulkarni-Nomizu product in symmetric 2-tensors.

Now, 
\begin{eqnarray}
\mathcal{W}(E_1,E_2,E_2,E_{n+1})&=&\mathcal{G}_e(R(E_1,E_2)E_2,\mathcal{J}_eE_{n+1})-\frac{cn}{2n-2}\mathcal{G}_e\circ \mathcal{G}(E_1,E_2,E_2,E_{n+1})\nonumber\\
&=&\mathcal{G}_e(R(E_1,E_2)E_2,E_{1})-\frac{cn}{2n-2}\mathcal{G}_e(E_2,E_2)\mathcal{G}(E_1,E_{n+1})\nonumber\\
&=&1-\frac{c^2n}{2n-2}=1-\frac{n}{2n-2},\nonumber
\end{eqnarray}
which is zero if and only if $n=2$. Similarly, one can prove the same for the other coefficients of the Weyl tensor.
\end{proof}

%When $n=2$, there is a complex structure on ${\mathbb L}({\mathbb S}^3(c))$, defined as follows:
%\[\mathcal{J}'(x\wedge X+y\wedge Y)=x\wedge J'X-y\wedge J'Y,\]
%where $J'$ is the complex structure in the plane $(x\wedge y)^{\bot}$ in ${\mathbb R}^4$.Then the metric $\mathcal{G}_0=\mathcal{G}_e(.,\mathcal{J}_{e}\circ\mathcal{J}'.)$ on ${\mathbb L}({\mathbb S}^3(c))$, is locally conformally flat, scalar flat and is invariant under the isometry group action in ${\mathbb L}({\mathbb S}^3(c))$. The metric $\mathcal{G}_0$ has been studied by several authors (see for example, \cite{An, agk, gag, gk1, Salvai}).

%Then,
%\begin{prop} The following manifolds $({\mathbb L}({\mathbb S}^3),\overline{G})$ and $({\mathbb L}({\mathbb S}^3),\mathcal{G})$ are isometric.
%\end{prop}

\vspace{0.1in}

Finally, for the hyperbolic case ($c=-1$), we show the following:
\begin{theo}
The isometric embedding $f:({\mathbb L}({\mathbb H}^{n+1}),{\mathcal G})\longrightarrow (T{\mathbb H}^{n+1},\overline{{\mathcal G}}):x\wedge y\mapsto (x,-y)$ is minimal.
\end{theo}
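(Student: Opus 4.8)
The plan is to read off the second fundamental form of $f$ in the frame of Proposition~2, using that $\overline{\mathcal{G}}$ is built from the horizontal--vertical splitting of $TT\mathbb{H}^{n+1}$ and that, by Proposition~2, $\mathcal{G}$ shares the connection $\mathcal{D}$ with $\mathcal{G}_e$, for which $\mathcal{D}_{E_i}E_j=0$ at the point. First I would keep the orthonormal frame $x,y,e_1,\dots ,e_n$ of $\mathbb{R}^{n+2}$ and the frame $E_i=x\wedge e_i,\ E_{n+i}=y\wedge e_i$. Writing a vector of $T_{(x,-y)}T\mathbb{H}^{n+1}$ as a pair (horizontal part, connection/vertical part) and differentiating the canonical representative (for which $\langle x,y\rangle_1=0$, hence $y_0=0$), a direct computation shows that $f_{\ast}$ carries $x\wedge e_i$ to the vertical lift $e_i^{v}$ and $y\wedge e_i$ to the horizontal lift $e_i^{h}$ at $(x,-y)$, up to sign. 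Thus $f(\mathbb{L}(\mathbb{H}^{n+1}))$ has codimension two and its normal bundle is spanned by the lifts $y^{h},y^{v}$ of the geodesic direction; these span an isotropic plane, since $\overline{\mathcal{G}}(y^h,y^v)=g(y,y)$ while $\overline{\mathcal{G}}(y^h,y^h)=\overline{\mathcal{G}}(y^v,y^v)=0$.

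Next I would invoke the Levi--Civita connection $\overline{\nabla}$ of $\overline{\mathcal{G}}$, as computed in \cite{gag2,gudkappos}, in terms of the lifts and the curvature $R$ of $\mathbb{H}^{n+1}$: namely $\overline{\nabla}_{X^{v}}Y^{v}=0$, $\overline{\nabla}_{X^{v}}Y^{h}=0$, $\overline{\nabla}_{X^{h}}Y^{v}=(\nabla_{X}Y)^{v}$, and $\overline{\nabla}_{X^{h}}Y^{h}=(\nabla_{X}Y)^{h}+C(X,Y)^{v}$, where $C(X,Y)$ is an explicit vertical curvature term (here explicit because $\mathbb{H}^{n+1}$ has constant curvature). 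By the Gauss formula and $\mathcal{D}_{E_a}E_b=0$, the second fundamental form is the normal component of $\overline{\nabla}_{f_{\ast}E_a}f_{\ast}E_b$. The key point is the behaviour of the mixed derivatives: since $f_{\ast}E_i$ is vertical and $f_{\ast}E_{n+i}$ horizontal, $h(E_i,E_{n+i})$ is the normal part of $\overline{\nabla}_{e_i^{v}}e_i^{h}=0$, while its symmetric partner $\overline{\nabla}_{e_i^{h}}e_i^{v}=(\nabla_{e_i}e_i)^{v}$ vanishes once the $e_i$ are chosen geodesic at the point; hence every mixed $h(E_i,E_{n+i})$ is zero.

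Finally I would assemble the mean curvature $H=\sum_{a,b}\mathcal{G}^{ab}h(E_a,E_b)$. Because $\mathcal{G}^{ij}=\mathcal{G}^{n+i,n+j}=0$ and $\mathcal{G}^{i,n+j}=c\,\delta_{ij}$, only the mixed pairs contribute, so $H=2c\sum_{i=1}^{n}h(E_i,E_{n+i})=0$, which is the assertion. This argument is insensitive to $n$ precisely because the horizontal--vertical (equivalently para-complex $\mathcal{J}_e$) structure is available on $\mathbb{L}(\mathbb{H}^{n+1})$ for every $n$, unlike $\mathbb{L}(\mathbb{R}^{n+1})$. The step I expect to be delicate is not the final trace but the justification of the lift computation along the image: the adapted frame $e_i$ depends on the entire geodesic, not only on its foot point, so I must extend the $e_i$ as honest tangent fields of $f(\mathbb{L}(\mathbb{H}^{n+1}))$ before applying the formulas for $\overline{\nabla}$, and confirm that $\overline{\nabla}_{e_i^{v}}e_i^{h}$ really records the normal projection $h(E_i,E_{n+i})$ rather than the ambient derivative of a non-tangent extension. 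Carrying out this extension carefully, together with the curvature bookkeeping in $\overline{\nabla}_{X^h}Y^h$ needed for the remaining (non-traced) components, is the main technical obstacle.
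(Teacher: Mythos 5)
Your overall architecture is the same as the paper's: compute the second fundamental form of $f$, observe that the neutral metric has $\mathcal{G}^{ij}=\mathcal{G}^{n+i,n+j}=0$ so only the mixed components $h(E_i,E_{n+i})$ enter the trace, and show those vanish. The paper, however, obtains the full tensor $h_f(\cdot,\cdot)=(0,(g(X_1,X_2)+g(Y_1,Y_2))y)$ by differentiating fields of the form $df(x\wedge X+y\wedge Y)=(-Y,-X)$ \emph{along the image}, using the relation $X=D_Y y$ which encodes tangency to the congruence; the mixed components then vanish by bilinearity. You instead try to read off $h$ from the lift formulas $\overline{\nabla}_{X^v}Y^h=0$, $\overline{\nabla}_{X^h}Y^v=(\nabla_XY)^v$ applied to $e_i^v,e_i^h$, and this is precisely where your argument has a genuine gap --- one you flag yourself but do not close, and which does not close for free. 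The horizontal lift $e_i^h$ is \emph{not} tangent to $f(\mathbb{L}(\mathbb{H}^{n+1}))$ along the fiber directions: tangency at a nearby image point $(x,-y')$ requires $g(e_i,y')=0$, and moving in the vertical direction $f_{\ast}E_i=(0,-e_i)$ destroys this orthogonality at first order, at rate $g(e_i,e_i)\neq 0$. Consequently an honest tangent extension of $f_{\ast}E_{n+i}$ along the fiber curve differs from the pure lift by a term of the form $\lambda(s)\,y(s)$ with $\lambda(0)=0$ but $\lambda'(0)\neq 0$, whose derivative is a \emph{normal} vector proportional to $y^h$; so the normal projection of $\overline{\nabla}$ of the tangent extension is not computed by $\overline{\nabla}_{e_i^v}e_i^h=0$, and the second fundamental form acquires exactly the kind of correction your formulas discard. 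Choosing the $e_i$ geodesic at the point is a red herring here: the obstruction comes from the fiber constraint ($y$ varies, the frame must stay orthogonal to it, and the representative must keep $\left<x,y\right>_1=0$), not from $\nabla_{e_i}e_i$.

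Two further inaccuracies compound this. First, the normal plane spanned by $y^h,y^v$ is not isotropic, as your own computation shows: $\overline{\mathcal{G}}(y^h,y^v)=g(y,y)\neq 0$, so the plane is nondegenerate with two null spanning vectors --- fortunately so, since a totally null normal bundle would not even admit the normal projection your Gauss-formula argument needs. Second, the description of the tangent and normal spaces you use is only valid where the adapted frame can be chosen with the right behaviour relative to the origin: the embedding $f$ is built from the canonical representative $\left<x,y\right>_1=0$, a condition that depends on the choice of origin in ${\mathbb R}^{n+2}$ and is preserved only by its stabilizer, and differentiating this constraint shows that away from the fiber over the origin the tangent spaces to the image tilt, so that the normal bundle is no longer $\mbox{span}\{y^h,y^v\}$. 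Your proposal never engages with this constraint, whereas it is exactly what governs which extensions are admissible. In short: your trace mechanism and the final bookkeeping agree with the paper, but the crucial claim $h(E_i,E_{n+i})=0$ is asserted via connection formulas for non-tangent extensions, and the self-acknowledged ``delicate step'' is where all the content of the theorem sits; as it stands the proof is incomplete at that point, and the paper's route --- differentiating genuinely tangent fields $(-Y,-X)$ with $X=D_Yy$ and only then separating tangential and normal parts --- is what a repaired version of your argument would have to reproduce.
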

\begin{proof}
The derivative of $f$ is given by:
\[
df(x\wedge X+y\wedge Y)=(-Y,-X).
\]
Note that $X=D_{Y}y$ and if $\overline{\mathcal{D}}$ denotes the Levi-Civita connection of $\overline{\mathcal{G}}$, we have
\begin{eqnarray}
\overline{\mathcal{D}}_{df(x\wedge X_1+y\wedge Y_1)}df(x\wedge X_1+y\wedge Y_1)&=&\overline{\mathcal{D}}_{(-Y_1,-X_1)}(-Y_2,-X_2)\nonumber\\
&=&(D_{Y_1}Y_2\,,\, R(y,Y_1)Y_2+D_{Y_1}X_2)\nonumber\\
&=&(D_{Y_1}Y_2\,,\, g(Y_1,Y_2)y+D_{Y_1}X_2)\nonumber\\
&=&(D_{Y_1}Y_2\,,\, -g(X_1,X_2)y+D_{Y_1}X_2)\nonumber\\
&&\qquad\qquad+(0,(g(X_1,X_2)+g(Y_1,Y_2))y),\nonumber
\end{eqnarray}
which implies that the second fundamental form $h_f$ is given by
\[
h_f(x\wedge X_1+y\wedge Y_1,x\wedge X_2+y\wedge Y_2)=(0,(g(X_1,X_2)+g(Y_1,Y_2))y).
\]
Recalling the basis $(E_1,\ldots E_{2n})$ of $T_{x\wedge y}{\mathbb L}({\mathbb S}^{n+1}(c))$, the mean curvature $H_f$ of $f$ is 
\[
H_f={\mathcal G}^{mn}h_f(E_m,E_n),
\]
so that
\begin{eqnarray}
H_f&=&{\mathcal G}^{i,n+i}h_f(E_i,E_{n+i})\nonumber \\
&=&h_f(x\wedge e_i,y\wedge e_i)\nonumber \\
&=&(0,(g(e_i,0)+g(0,e_i))y),\nonumber
\end{eqnarray}
which shows that $f$ is minimal.
\end{proof}

Considering the following almost para-complex structure ${\mathcal J}$ in ${\mathbb L}({\mathbb S}^{n+1}(c))$:
\[
{\mathcal J}(x\wedge X+y\wedge Y)=x\wedge X-y\wedge Y,
\]
we then have:
\begin{enumerate}
\item ${\mathcal J}$ is compatible with $\mathcal{G}$. Namely,
\begin{eqnarray}
\mathcal{G}({\mathcal J}(x\wedge X_1+y\wedge Y_1),{\mathcal J}(x\wedge X_2+y\wedge Y_2))&=&\mathcal{G}(x\wedge X_1-y\wedge Y_1,x\wedge X_2-y\wedge Y_2)\nonumber \\
&=&-g(X_1,Y_2)-g(X_2,Y_1)\nonumber \\
&=&-\mathcal{G}(x\wedge X_1+y\wedge Y_1,x\wedge X_2+y\wedge Y_2).\nonumber
\end{eqnarray}
\item ${\mathcal J}$ is integrable, i.e., $\mathcal{D}{\mathcal J}={\mathcal J}\mathcal{D}$. In fact,
\[
{\mathcal J}E_i=E_i,\qquad {\mathcal J}E_{n+i}=-E_{n+i}
\]
and the claim follows from $\mathcal{D}_{E_i}E_{j}=0$.

%Then, it is not hard one to see that $D{\mathcal J}={\mathcal J}D$.

\vspace{0.1in}

\end{enumerate}
Define the symplectic 2-form $\Omega$ in ${\mathbb L}({\mathbb S}^{n+1})$ by
\[
\Omega=\mathcal{G}({\mathcal J}.,.).
\]
In particular, 
\[
\Omega(x\wedge X_1+y\wedge Y_1,x\wedge X_2+y\wedge Y_2)=g(X_1,Y_2)-g(X_2,Y_1).
\]
%Note that $\Omega$ is the same symplectic structure defined in \cite{An} and \cite{gag}.
 
Then the quadruple $({\mathbb L}({\mathbb S}^{n+1}(c)),\mathcal{G}, \mathcal{J},\Omega)$ form a para-K\"ahler structure, so that the symplectic structure is the same with symplectic structure defined by the (para-) K\"ahler structure $({\mathbb L}({\mathbb S}^{n+1}(c)),\mathcal{G}_e, \mathcal{J}_e)$, since
\[
\Omega=\mathcal{G}_e({\mathcal J}_e.,.).
\]
The latter (para-) K\"ahler structure has been widely studied in \cite{An}, \cite{gag} and \cite{gk1}.

Every isometry $\phi:{\mathbb S}^{n+1}(c)\rightarrow {\mathbb S}^{n+1}(c)$, can be extended to a linear orthogonal transformation $\bar\phi$ in ${\mathbb R}^{n+2}$ restricted into ${\mathbb S}^{n+1}(c)$. This induces a mapping $F$ in the space of oriented geodesics defined by
\[
F(x\wedge y)=\phi(x)\wedge \bar\phi(y).
\]
The derivative of $F$ is
\[
dF(x\wedge X+y\wedge Y)=\phi(x)\wedge d \bar\phi(X)+\bar\phi(y)\wedge d \phi(Y).
\]
Using now the fact that $X,Y\in (x\wedge y)^{\bot}$ (see \cite{An}), we have that $X\in T_{x}{\mathbb S}^{n+1}(c)$ and thus,
\[
dF(x\wedge X+y\wedge Y)=\phi(x)\wedge d\phi(X)+\bar\phi(y)\wedge d \phi(Y).
\]
We now have
\[
\mathcal{G}(dF(x\wedge X_1+y\wedge Y_1),dF(x\wedge X_2+y\wedge Y_2))
\]
\[
=\mathcal{G}(\phi(x)\wedge d \phi(X_1)+\bar \phi(y)\wedge d\phi(Y_1),\phi(x)\wedge d \phi(X_2)+\bar \phi(y)\wedge d\phi(Y_2))
\]
\[
=g(d \phi(X_1),d \phi(Y_2))+g(d \phi(X_2),d\phi(Y_1))
\]
\[
=g(X_1,Y_2)+g(X_2,Y_1)
\]
\[
=\mathcal{G}(x\wedge X_1+y\wedge Y_1,x\wedge X_2+y\wedge Y_2),
\]
which shows the following:
\begin{prop}
The metric $\mathcal{G}$ is invariant under the group action of the isometry group of $({\mathbb S}^{n+1}(c),g)$ in the space of oriented geodesics ${\mathbb L}({\mathbb S}^{n+1}(c))$.
\end{prop}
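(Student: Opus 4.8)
The plan is to exploit the fact that the metric $\mathcal{G}$ is defined entirely through the ambient bilinear form $\langle\cdot,\cdot\rangle_c$ on $\mathbb{R}^{n+2}$ by the explicit formula (\ref{e:definofmetric}), so that any linear map preserving $\langle\cdot,\cdot\rangle_c$ must automatically preserve $\mathcal{G}$. Thus the whole argument reduces to transporting an isometry of the base to a linear symmetry of the ambient space and then reading off (\ref{e:definofmetric}).

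First I would recall the standard realization of isometries of a space form: every isometry $\phi$ of $(\mathbb{S}^{n+1}(c),g)$ is the restriction to the quadric of a linear transformation $\bar\phi$ of $\mathbb{R}^{n+2}$ preserving $\langle\cdot,\cdot\rangle_c$ (orthogonal for $c=1$, Lorentz-orthogonal for $c=-1$). Because $\bar\phi$ preserves $\langle\cdot,\cdot\rangle_c$, it maps the quadric $\mathbb{S}^{n+1}(c)$ to itself and carries any oriented $2$-plane $x\wedge y$ with the normalization $\langle y,y\rangle_c=c$ to a plane of the same type; hence the induced map $F(x\wedge y)=\bar\phi(x)\wedge\bar\phi(y)$ is a well-defined self-map of $\mathbb{L}(\mathbb{S}^{n+1}(c))$. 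This well-definedness is the step I would verify carefully, since the entire argument rests on it.

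Next, since $\bar\phi$ is linear its differential equals $\bar\phi$, and on the tangent spaces of the quadric $d\phi$ coincides with $\bar\phi$ restricted to those spaces; therefore the differential of $F$ is $dF(x\wedge X+y\wedge Y)=\bar\phi(x)\wedge\bar\phi(X)+\bar\phi(y)\wedge\bar\phi(Y)$, and this again lands in $T_{F(x\wedge y)}\mathbb{L}(\mathbb{S}^{n+1}(c))$ because $\bar\phi$ preserves orthogonality and hence $(x\wedge y)^{\bot}$. Substituting this into (\ref{e:definofmetric}), every term $g(\bar\phi(X_i),\bar\phi(Y_j))=\langle\bar\phi(X_i),\bar\phi(Y_j)\rangle_c$ collapses to $\langle X_i,Y_j\rangle_c=g(X_i,Y_j)$, so $\mathcal{G}(dF\,\cdot,dF\,\cdot)=\mathcal{G}(\cdot,\cdot)$ and $F$ is an isometry of $(\mathbb{L}(\mathbb{S}^{n+1}(c)),\mathcal{G})$.

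The main obstacle is the bookkeeping in the first two steps rather than any analytic difficulty: one must confirm that the splitting $X,Y\in(x\wedge y)^{\bot}$ is respected by $\bar\phi$ and that the identification of $d\phi$ with $\bar\phi|_{T}$ is legitimate. Once these are in place, the invariance of $\mathcal{G}$ is an immediate algebraic consequence of orthogonality, requiring no input from the curvature, the connection $\mathcal{D}$, or the paracomplex structure $\mathcal{J}$.
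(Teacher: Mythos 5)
Your proposal is correct and follows essentially the same route as the paper: extend the isometry $\phi$ to a linear transformation $\bar\phi$ of $\mathbb{R}^{n+2}$ preserving $\langle\cdot,\cdot\rangle_c$, observe that the induced map $F(x\wedge y)$ on $\mathbb{L}(\mathbb{S}^{n+1}(c))$ has differential $dF(x\wedge X+y\wedge Y)=\bar\phi(x)\wedge d\phi(X)+\bar\phi(y)\wedge d\phi(Y)$, and substitute into the explicit formula (\ref{e:definofmetric}) so that each term $g(d\phi(X_i),d\phi(Y_j))$ collapses to $g(X_i,Y_j)$. Your extra care in checking well-definedness of $F$ and that $dF$ lands in the correct tangent space is a harmless refinement of what the paper does implicitly.
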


%%%%%%%%%%%%%%%%%%%%%%%%%%%%%%%%%%%%%%%%%%%%%%%%%%%%%%%%%%%%%%%%%%%%%%%%%%%%%%%%%%%%%%%%%%%%%%%%%%%%%%%%%%%%%%%%%%%%%%%%%%%%%%%%
%%%%%%%%%%%%%%%%%%%%%%%%%%%%%%%%%%%%%%%%%%%%%%%%%%%%%%%%%%%%%%%%%%%%%%%%%%%%%%%%%%%%%%%%%%%%%%%%%%%%%%%%%%%%%%%%%%%%%%%%%%%%%%%%

\vspace{0.2in}

\section{Geodesics}

We now study geodesics in $({\mathbb L}({\mathbb S}^{n+1}(c)),\mathcal{G})$. We start with the following proposition:

\begin{prop}
If the curve $\gamma(t)=x(t)\wedge y(t)$ is a $\mathcal{G}$-geodesic ${\mathbb L}({\mathbb S}^{n+1}(c))$, then the vector field $y=y(t)$ is orthogonal to the curve $x=x(t)$ in ${\mathbb S}^{n+1}(c)$.
\end{prop}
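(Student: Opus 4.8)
The plan is to compute in the ambient space $\Lambda^2(\mathbb{R}^{n+2})$, in which $\mathbb{L}(\mathbb{S}^{n+1}(c))$ sits as a submanifold carrying the metric $\mathcal{G}_e$ induced from the flat metric $\langle\langle\cdot,\cdot\rangle\rangle_c$. Since $\mathcal{G}$ and $\mathcal{G}_e$ share the Levi-Civita connection $\mathcal{D}$, and $\mathcal{D}$ is the tangential projection of the flat ambient connection, the geodesic equation $\mathcal{D}_{\gamma'}\gamma'=0$ is equivalent to requiring that the ordinary second derivative $\gamma''$, taken in $\Lambda^2(\mathbb{R}^{n+2})$, be orthogonal to $T_{x\wedge y}\mathbb{L}(\mathbb{S}^{n+1}(c))$ with respect to $\langle\langle\cdot,\cdot\rangle\rangle_c$. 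The splitting I will exploit is the orthogonal decomposition $\Lambda^2(\mathbb{R}^{n+2})=\mathrm{span}\{x\wedge e_i,\,y\wedge e_i\}\oplus\mathrm{span}\{x\wedge y,\,e_i\wedge e_j\}$, whose first summand is exactly the tangent space.

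First I would differentiate the defining relations $\langle x,x\rangle_c=1$, $\langle y,y\rangle_c=c$ and $\langle x,y\rangle_c=0$ of the representative. These give $\langle x',x\rangle_c=0$ and $\langle x',y\rangle_c=-\langle x,y'\rangle_c$, so $x'$ carries no component along $x$; writing $x'=b\,y+X_0$ and $y'=-cb\,x+Y_0$ with $X_0,Y_0\in(x\wedge y)^{\bot}$, the single quantity to be controlled is the scalar $b$, since $\langle y,x'\rangle_c=cb$ and $c\neq 0$. A short wedge computation then puts the velocity in canonical form, $\gamma'=x\wedge Y_0-y\wedge X_0$, confirming that $\gamma'\in T_{x\wedge y}\mathbb{L}(\mathbb{S}^{n+1}(c))$ with the expected components.

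Next I would expand $\gamma''=x''\wedge y+2\,x'\wedge y'+x\wedge y''$, substitute the decompositions of $x',y'$ (and the induced ones for $x'',y''$, which reintroduce $b'$ together with the $(x\wedge y)^{\bot}$-projections of $X_0'$ and $Y_0'$), and collect terms along the orthogonal splitting above. Setting the tangential ($x\wedge e_i$ and $y\wedge e_i$) part of $\gamma''$ to zero yields the geodesic system, which separates into one vector equation of $x$-type and one of $y$-type in $(x\wedge y)^{\bot}$.

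The main obstacle is that this system governs only the curve in $\mathbb{L}(\mathbb{S}^{n+1}(c))$ and not the choice of representative $x(t)\wedge y(t)$: the scalar $b$ is a pure reparametrization (gauge) degree of freedom, since replacing $(x,y)$ by an isometric motion within the two-plane shifts $\langle y,x'\rangle_c$ by the derivative of the motion parameter while leaving $\gamma$ unchanged. Hence $b$ cannot be forced to vanish by the geodesic equation in isolation, and the orthogonality must be read relative to the normalized representative fixed by the standing convention $\langle x,y\rangle_1=0$ (in the hyperbolic case this selects $x(t)$ as the point of $\gamma(t)$ that is Euclidean-closest to the origin, so that $y_0=0$). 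The crux is therefore to differentiate this normalization, producing an extra scalar relation, and feed it into the geodesic system so as to pin down and annihilate $b$; tracking the indefinite signature of $\langle\cdot,\cdot\rangle_c$ correctly through the projections onto $(x\wedge y)^{\bot}$ and through the normal part of $\gamma''$ is the heaviest bookkeeping, and I expect it to be the delicate step.
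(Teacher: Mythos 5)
Your setup is sound and coincides with the paper's framework: both work in $\Lambda^2(\mathbb{R}^{n+2})$ with the flat connection, use the fact that $\mathcal{G}$ and $\mathcal{G}_e$ share the Levi--Civita connection $\mathcal{D}$, and read the geodesic equation as the vanishing of the tangential part of $\gamma''$; your canonical form $\gamma' = x\wedge Y_0 - y\wedge X_0$ is correct. But the proposal stops exactly where the proof has to start. You announce that ``the crux is to differentiate the normalization, producing an extra scalar relation, and feed it into the geodesic system so as to pin down and annihilate $b$,'' and then you do not carry out this step -- so no part of the implication ``geodesic $\Rightarrow g(\dot x, y)=0$'' is actually established. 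Worse, the mechanism you propose cannot supply the missing relation in the spherical case: for $c=1$ the condition $\langle x,y\rangle_1=0$ holds automatically for \emph{every} representative (it is just $y\in T_x\mathbb{S}^{n+1}$), so differentiating it reproduces $\langle x',y\rangle_1=-\langle x,y'\rangle_1$, which is already encoded in your decomposition $x'=by+X_0$, $y'=-cbx+Y_0$ and constrains $b$ not at all; the rotation gauge $(x,y)\mapsto(x\cos\theta+y\sin\theta,\,-x\sin\theta+y\cos\theta)$ survives that normalization wholesale. (The convention $\langle x,y\rangle_1=0$ is introduced in the paper for the hyperbolic embedding $f$ of Section 2 and plays no role in the proof of this proposition.)

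The paper's actual route is different from your plan and is precisely the content you are missing. It never differentiates a normalization; instead it writes the geodesic system as $D_{\dot x}\dot x = a\,y$ and $D^2_{\dot x}y + g(\dot x,y)\dot x = b\,y$, assumes $t$ is the arc length of $x$, and observes $0=g(D_{\dot x}\dot x,\dot x)=a\,g(\dot x,y)$. Where $a\neq 0$ the conclusion is immediate. The delicate case is $a\equiv 0$ on an interval (so $x$ is a geodesic of the space form), and there the paper runs a moving-frame ODE argument: setting $c_0=g(D_{\dot x}y,\dot x)$ and $c_k=g(D_{\dot x}y,e_k)$, one derives $\dot c_0=-g(\dot x,y)$ and $b=-\sum_{k\geq 1}c_k^2=\mathrm{const}$, whence $\dot b = 2\tfrac{d}{dt}\bigl(g(\dot x,y)^2\bigr)=0$, so $g(\dot x,y)$ is constant, forcing $g(\dot x,D_{\dot x}y)=0$, hence $c_0=0$, and finally $g(\dot x,y)=-\dot c_0=0$. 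None of this analysis appears in your proposal. Your gauge observation is, in itself, a fair caveat -- the quantity $g(\dot x,y)$ is representative-dependent, and the paper tacitly works with a fixed lift with $x$ arc-length parametrized -- but diagnosing the obstacle is not the same as overcoming it: as written, the proposal is a plan whose one substantive step is both unexecuted and, for $c=1$, built on a normalization that carries no information.
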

\begin{proof} We prove the proposition for $c=1$, as the proof is similar for $c=-1$. 
Denote by $\overline\nabla$ the flat connection of $\Lambda^2{\mathbb R}^{n+2}$ and $D$ the Levi-Civita connection of $g$. Then 
\[
\overline\nabla_{\dot\gamma}\dot\gamma=D_{\dot x}\dot x\wedge y+x\wedge (D^2_{\dot x}y+\left<\dot x,y\right>_1\dot x)-x\wedge y+2\dot x\wedge D_{\dot x}y.
\]
If $\mathcal{D}$ is the Levi-Civita connection of $\mathcal{G}$, we then have:
\[
\mathcal{D}_{\dot\gamma}\dot\gamma=D_{\dot x}\dot x\wedge y+x\wedge (D^2_{\dot x}y+\left<\dot x,y\right>_1\dot x).
\]
Suppose $\gamma$ is a $\mathcal{G}$-geodesic. Then 
\[
D_{\dot x}\dot x=ay\quad\mbox{and}\quad D^2_{\dot x}y+\left<\dot x,y\right>_1\dot x=by,
\]
for some functions $a=a(t),\,b=b(t)$ along the curve $x=x(t)$. Assuming $t$ is the arc-length of the curve $x$, it follows that
\[
0=g(D_{\dot x}\dot x,\dot x)=ag(\dot x,y).
\]  
If $a\neq 0$ at in some open interval, then obviously we have that $g(\dot x,y)=0$. Assuming $a=0$ in an open interval, we have that $x$ is a geodesic at in that interval. Note that $\dot x,y$ are linearly independent, since otherwise it can be shown that $y=\pm \dot x$ and therefore the curve $\gamma(t)=\pm x\wedge\dot x$ is not regular.

 Let $x,\dot x,y,e_1,\ldots e_{n-1}$ be a frame of ${\mathbb R}^{n+2}$ such that $g(e_i,e_j)=\delta_{ij}$ and set $c_0=g(D_{\dot x}y,\dot x)$ with $c_k=g(D_{\dot x}y,e_k)$. 

Now, $g(D_{\dot x}y,D_{\dot x}y)=\displaystyle\sum_{k=0}c_k^2=-g(D^2_{\dot x}y,y)$, and therefore
$$\displaystyle\sum_{k=0}c_k^2=-b+g(\dot x,y)^2.$$
On the other hand
\[
by-g(\dot x,y)\dot x=D^2_{\dot x}y=D_{\dot x}(D_{\dot x}y)=D_{\dot x}(c_0\dot x+\sum_{k=1}c_ke_k)
\]
\[
\qquad\qquad\qquad\qquad\qquad\qquad=\dot{c}_0\dot x+\sum_k(\dot{c}_ke_k+c_k\dot{e}_k)=\dot{c_0}\dot x+\sum_k(\dot{c}_ke_k+c_k\dot{e}_k)
\]
\[
\qquad\qquad\qquad\qquad\qquad\qquad=\dot{c_0}\dot x-(\sum_{k=1}c_k^2)y+\Lambda(e_1,\ldots,e_{n-1}),
\]
where, $\Lambda\in\mbox{span}\{e_1,\ldots,e_{n-1}\}$. Then
\[
\dot{c}_0=-g(\dot x,y),\quad b=-\sum_{k=1}c_k^2,
\]
and $\Lambda=0$. In particular, for every $k=1,\ldots, n-1$, we have
\[
\dot{c}_k+\sum_{i\neq k}g(e_i,\dot{e}_k)c_i=0.
\]
Thus,
\[
\sum_{k=1}^{n-1}c_k\dot{c}_k=\sum_{k,i=1}^{n-1}c_ic_kg(e_i,\dot{e}_k)=0,
\]
which implies that $\displaystyle\sum_{k=1}^{n-1}c^2_k=\mbox{constant}$. This means, $b=-\displaystyle\sum_{k=1}^{n-1}c_k^2$ is constant and by definition we have
\[
b=g(D^2_{\dot x}y+\left<\dot x,y\right>\dot x,y)=-g(D_{\dot x}y,D_{\dot x}y)+\left<\dot x,y\right>_1^2.
\]
Using now the fact that $b$ is constant we have
\[
\dot b=4g(\dot x,y)g(\dot x,D_{\dot x}y)=2\frac{d}{dt}\Big(g(\dot x,y)^2\Big)=0.
\]
It follows $g(\dot x,y)$ is constant and therefore $g(\dot x,D_{\dot x}y)=0$. Therefore, $c_0=0$ since
$c_0=g(D_{\dot x}y,\dot x)$. But $0=\dot{c}_0=-g(\dot x,y)$ and the proposition follows.
\end{proof}

\vspace{0.1in}

Every curve $\gamma=\gamma(t)=x(t)\wedge y(t)$ in ${\mathbb L}({\mathbb S}^{n+1}(c))$, corresponds to a ruled surface in ${\mathbb S}^{n+1}(c)$ and such a surface, can be parametrised by 
\begin{equation}\label{e:parametr}
X(t,\theta)=x(t)\cos c(\theta)+y(t)\sin c(\theta),
\end{equation}
where,
\[
\cos c(\theta)=\begin{cases} \cos(\theta), & c=1\\ \cosh(\theta), & c=-1\end{cases}
\]
For $n=2$, we show the following:
\begin{theo}\label{t:geodesics}
A curve $\gamma$ in $({\mathbb L}({\mathbb S}^{3}(c)),{\mathcal G})$ is a geodesic if and only if the corresponding ruled surface in ${\mathbb S}^{3}(c)$ is minimal.
\end{theo}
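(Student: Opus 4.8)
The plan is to pick a convenient representative $x(t)\wedge y(t)$ of the curve $\gamma$ and compute the mean curvature of the associated ruled surface (\ref{e:parametr}) by hand, then match its vanishing against the $\mathcal{G}$-geodesic equations. I would treat $c=1$ in detail, the case $c=-1$ being identical after replacing the circular functions by their hyperbolic analogues. By the previous proposition a $\mathcal{G}$-geodesic satisfies $g(\dot x,y)=0$, and for the converse direction this orthogonality can be imposed using the freedom in choosing the directrix $x$ inside the ruled surface (which changes the representative but not $\gamma$ nor the surface); so I assume throughout that $g(\dot x,y)=0$ and normalise $t$ so that $|\dot x|=1$. Since $n=2$ and $\R[n+2]=\R[4]$, I complete $\{x,y,\dot x\}$ to an orthonormal frame $\{x,y,\dot x,v\}$ and record the structure functions $p=g(D_{\dot x}y,\dot x)$, $q=g(D_{\dot x}y,v)$, $\beta=g(D_{\dot x}\dot x,v)$; differentiating the frame relations gives $g(D_{\dot x}\dot x,y)=-p$ and $D_{\dot x}v=-qy-\beta\dot x$, which is all the bookkeeping I need.

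Next I would compute the induced geometry of $X(t,\theta)=x\cos\theta+y\sin\theta$. The normalisations above make the parametrisation orthogonal, $G=\langle X_\theta,X_\theta\rangle=1$ and $F=\langle X_t,X_\theta\rangle=0$, while $X_t=(\cos\theta+p\sin\theta)\dot x+q\sin\theta\,v$, so $E=(\cos\theta+p\sin\theta)^2+q^2\sin^2\theta$. Because each ruling is a geodesic of $\mathbb{S}^{n+1}(c)$ one has $X_{\theta\theta}=-X$, which is normal to the space form; hence the coefficient $N=g(D_{X_\theta}X_\theta,\nu)$ vanishes and the mean curvature collapses to $H=L/(2E)$, where $L=g(D_{X_t}X_t,\nu)$ and $\nu=E^{-1/2}\bigl(-q\sin\theta\,\dot x+(\cos\theta+p\sin\theta)v\bigr)$ is the unit normal. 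Thus the surface is minimal if and only if $L\equiv 0$.

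The heart of the argument is to expand $L$. Writing $X_{tt}=\ddot x\cos\theta+\ddot y\sin\theta$ and inserting $\ddot x=D_{\dot x}\dot x-x$ and $\ddot y=D^2_{\dot x}y-px$, a direct calculation presents $\sqrt{E}\,L$ as a quadratic form in $(\cos\theta,\sin\theta)$:
\[
\sqrt{E}\,L=\beta\cos^2\theta+(2p\beta+\dot q)\sin\theta\cos\theta+\bigl((p^2+q^2)\beta+p\dot q-q\dot p\bigr)\sin^2\theta .
\]
Requiring this to vanish for all $\theta$ forces successively $\beta=0$, then $\dot q=0$, and finally $q\dot p=0$. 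On the other hand, expanding the right-hand side of the formula $\mathcal{D}_{\dot\gamma}\dot\gamma=D_{\dot x}\dot x\wedge y+x\wedge(D^2_{\dot x}y+\langle\dot x,y\rangle\dot x)$ from the preceding proposition in the tangent frame $\{x\wedge\dot x,\,x\wedge v,\,y\wedge\dot x,\,y\wedge v\}$ shows that $\gamma$ is a $\mathcal{G}$-geodesic precisely when $\beta=0$, $\dot p=0$ and $\dot q=0$; equivalently $D_{\dot x}\dot x=ay$ and $D^2_{\dot x}y+\langle\dot x,y\rangle\dot x=by$. Comparing the two systems gives the equivalence.

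The main obstacle I anticipate is exactly this comparison, since the minimality system and the geodesic system coincide only up to the factor $q$ in the final equation. This discrepancy reflects the degenerate possibility $q\equiv 0$, in which $D_{\dot x}y$ stays tangent to $x$, the frame vector $v$ becomes parallel, and the whole surface collapses into a totally geodesic $\mathbb{S}^2(c)\subset\mathbb{S}^3(c)$; such a surface is automatically minimal even though $\gamma$ need not be a geodesic. I would therefore phrase the theorem for genuinely ruled surfaces, where the ruling directions actually rotate ($q\neq 0$) and the two systems agree, or else isolate this flat case separately. The remaining points are routine: checking that $g(\dot x,y)=0$ and $|\dot x|=1$ can always be arranged, verifying $F=0$ and $N=0$ as claimed, and observing that the frame computation transfers verbatim to $c=-1$.
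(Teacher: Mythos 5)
Your argument is correct and is, at bottom, the same computation as the paper's: reduce via the preceding proposition to an adapted orthonormal frame along the directrix (your $p,q,\beta$ are the paper's $c_0$, $c_1$, and the $e_1$-component of $D_{\dot x}\dot x$, which the paper never names), compute the mean curvature of the parametrisation (\ref{e:parametr}), and match its vanishing against the system $D_{\dot x}\dot x=ay$, $D^2_{\dot x}y+\langle\dot x,y\rangle\dot x=by$. Where you differ is in completeness, and the difference is substantive. The paper's proof really only runs in one direction: its frame presupposes $g(\dot x,y)=0$ (available from the geodesic hypothesis via the proposition), and its formula $h(X_t,X_t)=\dot c_1\sin c\theta$ holds only after the geodesic conditions are imposed --- in general one gets $h(X_t,X_t)=\beta(\cos c\theta+p\sin c\theta)+\dot c_1\sin c\theta$ with respect to the paper's normal $N$, so minimality does not immediately reduce to $\dot c_1=0$. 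Your identity $\sqrt{E}\,L=\beta\cos^2\theta+(2p\beta+\dot q)\sin\theta\cos\theta+\bigl((p^2+q^2)\beta+p\dot q-q\dot p\bigr)\sin^2\theta$ is the correct general expression (I verified it), and together with your gauge argument --- rotating the directrix along the rulings by solving a first-order ODE for $s(t)$ so that $g(\dot x,y)=0$ may be assumed in the converse direction, a step the paper passes over in silence --- it is exactly what the ``if'' implication requires.

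Your caveat about $q\equiv 0$ is not a defect of your proof but a genuine gap in the statement as printed, and you have in effect produced a counterexample to the literal ``only if'' direction: when $q\equiv\beta\equiv 0$ the vector $v$ is constant in ${\mathbb R}^4$, the ruled surface lies in the totally geodesic ${\mathbb S}^2(c)=v^\perp\cap{\mathbb S}^3(c)$ and is minimal for free, yet the geodesic system still demands $\dot p=0$; taking $x$ to be a curve of nonconstant geodesic curvature in that ${\mathbb S}^2(c)$, with $y$ its rotated unit tangent (so $p=\mp\kappa_g$ is nonconstant), gives a regular curve $\gamma$ in ${\mathbb L}({\mathbb S}^3(c))$ whose ruled surface is minimal but which is not a geodesic --- not even after reparametrisation, since in your gauge the pre-geodesic condition forces $\beta v$ to be proportional to $\dot x$, hence $\beta=0$ and then $\dot p=\dot q=0$. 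So your proposed resolution --- require the rulings to genuinely rotate, $q\neq 0$, or split off the totally geodesic case --- is the honest formulation; the paper's ``if and only if'' never confronts this because its computation never leaves the geodesic hypothesis.
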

\begin{proof}
We know that $\dot x,y$ are linearly independent and let, $\{x,\dot x,y, e_1\}$ be an orthonormal frame of ${\mathbb R}^4,\left<.,.\right>)$ along the curve $x=x(t)$. The corresponding ruled surface, parametrised by (\ref{e:parametr}), has normal vector fields $N$, where:
\[
N(t,\theta)=e_1-\frac{c_1\sin c\theta}{|X_t|^2}X_t,
\]
with $c_1=\left<D_{\dot x}y,e_1\right>_c$. Now
\[
N_{\theta}=-\frac{c_1\cos c\theta}{|X_t|^2}X_t,\quad N_{t}=\dot{e}_1-\frac{\dot{c}_1\sin c\theta}{|X_t|^2}X_t-\frac{c_1\sin c\theta}{|X_t|^2}X_{tt}+\frac{c_1\sin c\theta}{|X_t|^4}\left<X_{tt},X_t\right>X_t,,
\]
If $h$ is the second fundamental form of $X$, we then have
\[
h(X_t,X_t)=-\left<X_t,N_t\right>_c=\dot{c}_1\sin c\theta
\]
\[
h(X_t,X_{\theta})=-\left<X_t,N_{\theta}\right>_c=c_1\cos c\theta
\]
\[
h(X_{\theta},X_{\theta})=-\left<X_{\theta},N_{\theta}\right>_c=0.
\]
If $H$ is the mean curvature and $t_{ij}$ be the induced metric $X^{\ast}g$, we have that $t_{t\theta}=0$. Therefore
\[
H=\frac{1}{2}t^{ij}h(X_i,X_j)=\frac{1}{2}t^{tt}h(X_t,X_t)+\frac{1}{2}t^{\theta\theta}h(X_{\theta},X_{\theta}).
\]
Thus, using the previous proposition
\[
H=\frac{\dot{c}_1\sin c\theta}{2|X_t|^2}=0,
\]
since, $c_1$ is constant. 
\end{proof}

It would be interesting to know whether the Theorem \ref{t:geodesics} can be extended for any dimension, we therefore conjecture the following: 

\begin{conj}
A curve $\gamma$ in $({\mathbb L}({\mathbb S}^{n+1}(c)),{\mathcal G})$ is a geodesic if and only if the corresponding ruled surface in ${\mathbb S}^{n+1}(c)$ is minimal.
\end{conj}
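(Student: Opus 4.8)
The plan is to push the moving-frame computation of Theorem \ref{t:geodesics} to arbitrary $n$, the one genuinely new feature being that the ruled surface \eqref{e:parametr} now sits in a space form of dimension $n+1$, so that its normal bundle has rank $n-1$ and its mean curvature is a \emph{vector} rather than a scalar. First I would fix, along the directrix $x=x(t)$, an adapted orthonormal frame $\{x,\dot x,y,e_1,\dots,e_{n-1}\}$ of $({\mathbb R}^{n+2},g)$, $g=\left<.,.\right>_c$, and record the scalars $c_0=g(D_{\dot x}y,\dot x)$, $c_k=g(D_{\dot x}y,e_k)$ for $k=1,\dots,n-1$, together with the connection coefficients $\omega_{jk}=g(\dot e_j,e_k)$ of the frame transverse to the ruling. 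Differentiating \eqref{e:parametr} I would compute $X_t$ and $X_\theta$, observe that every ruling is a geodesic of ${\mathbb S}^{n+1}(c)$ so that $h(X_\theta,X_\theta)=0$, and check that the induced metric satisfies $t_{t\theta}=\pm g(\dot x,y)$ with $t_{\theta\theta}$ a nonzero constant. By the orthogonality Proposition the induced metric is then diagonal along a $\mathcal{G}$-geodesic, and the mean curvature vector collapses to $H=\tfrac{1}{2}t^{tt}\,\big(D_{X_t}X_t\big)^{\perp}$, where $\perp$ denotes projection onto the surface normal bundle.

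Next I would build the normal frame explicitly by Gram--Schmidt from the $e_k$: setting $N_k=e_k-\big(c_k\sin c\theta/|X_t|^2\big)X_t$ makes each $N_k$ orthogonal to $X_t$ and, once $g(\dot x,y)=0$, also to $X_\theta$. The heart of the argument is then to expand $g(D_{X_t}X_t,N_k)$ as a trigonometric (resp. hyperbolic) polynomial in $\theta$ and to read off its coefficients in terms of $\dot c_k$, the $c_j$ and $\omega_{jk}$. In the scalar case $n=2$ this collapsed to $H\propto\dot c_1$, so that minimality for every $\theta$ was equivalent to $c_1$ being constant; for general $n$ I expect each normal component of $H$ to be a combination of $\dot c_k+\sum_{j}\omega_{jk}c_j$, i.e. of the induced-connection derivative of the striction field $\sum_k c_k e_k$, the part of $D_{\dot x}y$ normal to the ruling. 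I would then compare this with the $\mathcal{G}$-geodesic equations which, by the computation preceding the orthogonality Proposition, read $(D_{\dot x}\dot x)^{\perp}=0$ and $(D^2_{\dot x}y+g(\dot x,y)\dot x)^{\perp}=0$, the superscript again denoting the component orthogonal to $\mathrm{span}\{x,y\}$; a direct expansion of $D^2_{\dot x}y$ in the frame shows that the second equation is exactly the vanishing of $\dot c_k+\sum_{j}\omega_{jk}c_j$ for every $k$. Establishing this identification in both directions would give the stated equivalence.

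The step I expect to be the main obstacle is precisely the higher-rank normal bundle. For $n=2$ the normal line carries no nontrivial $\theta$-rotation and the single coefficient decouples immediately; for $n>2$ the vectors $N_k(t,\theta)$ genuinely rotate with $\theta$ through the correction term proportional to $X_t$, and they are coupled by the normal connection $\omega_{jk}$, so the $\theta$-expansion of $H$ mixes the different normal components, and one must prove that this mixing nevertheless decouples, so that $H\equiv 0$ is equivalent to the simultaneous vanishing of all $\dot c_k+\sum_{j}\omega_{jk}c_j$. A second, subtler, difficulty arises only in the converse implication (minimal $\Rightarrow$ geodesic): one cannot assume the orthogonality $g(\dot x,y)=0$ a priori, so either it must be shown that a minimal ruled surface always admits an orthogonal reparametrisation of its directrix, or the computation of $H$ must be carried through with the full non-diagonal metric $t_{ij}$, in which the cross term $t_{t\theta}=\pm g(\dot x,y)$ re-enters every formula and recouples the normal components. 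Controlling this coupling uniformly in $n$ is, I believe, the reason the statement is only conjectured here.
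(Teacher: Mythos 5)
The statement you have been given is Conjecture~1 of the paper, which the author explicitly leaves \emph{open}: the equivalence is proved only for $n=2$ (Theorem~\ref{t:geodesics}), and the general case is stated as a conjecture precisely because the proof is not known. So there is no proof in the paper to compare against, and your proposal, read on its own terms, is a research plan rather than a proof — you say as much yourself when you describe the key steps as what you ``expect''. The parts you do compute are consistent with the paper's $n=2$ argument and with the proof of the orthogonality Proposition: the induced metric indeed satisfies $t_{\theta\theta}=\pm 1$ and $t_{t\theta}=g(\dot x,y)$, and the $e_k$-components of the second geodesic equation are exactly the relations $\dot c_k+\sum_{i}g(e_i,\dot e_k)c_i=0$ appearing in that Proposition. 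But consistency of the setup is not a proof of the conjecture.

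The two genuine gaps are the ones you flag but do not close. First, the decoupling: granting your adapted frame, the $k$-th normal component of the mean curvature has the schematic form $t^{tt}\bigl(d_k\cos c\theta+(\dot c_k+\sum_j\omega_{jk}c_j)\sin c\theta\bigr)$ plus cross terms, where $d_k=g(D_{\dot x}\dot x,e_k)$; in the paper's $n=2$ computation the $d_k$-term is invisible only because the geodesic equation $D_{\dot x}\dot x=ay$ was \emph{assumed} before computing $H$. For ``minimal $\Rightarrow$ geodesic'' you must show that $H\equiv 0$ for all $\theta$ forces simultaneously $d_k=0$ and $\dot c_k+\sum_j\omega_{jk}c_j=0$ for every $k$, i.e. both equations $(D_{\dot x}\dot x)^{\perp}=0$ and $(D^2_{\dot x}y+g(\dot x,y)\dot x)^{\perp}=0$, despite the $\theta$-dependent mixing of the $N_k$ through the normal connection; this is exactly the step you defer. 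Second, in that converse direction you cannot assume $g(\dot x,y)=0$, and you offer no argument that a minimal ruled surface admits an orthogonal reparametrisation of its directrix, nor do you carry the computation through with the nondiagonal metric $t_{t\theta}=g(\dot x,y)$. There is also a normalisation issue left untreated: the geodesic equation depends on an affine parameter while minimality of the ruled surface is parametrisation-independent, so the equivalence can hold at best up to reparametrisation of $\gamma$ (e.g. in the gauge $\left<x,y\right>_1=0$ used in the paper), and a complete proof must fix this. As it stands, neither implication is established, so the conjecture remains open.
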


%%%%%%%%%%%%%%%%%%%%%%%%%%%%%%%%%%%%%%%%%%%%%%%%%%%%%%%%%%%%%%%%%%%%%%%%%%%%%%%%%%%%%%%%%%%%%%%%%%%%%%%%%%%%%%%%%%%%%%%%%%%%%%%%%%%%%%%%%%%%%%%%%%%%%%%%%%%%%%%%%%%%%%%%%%%%%%%%%%%%%%%%%%%%%%%%%%%%%%%%%%%%%%%%%%%%%%%%%%%%%%%%%%%%%%%%%%%%%%%%%%%%%%%%%%%%%%%
%%%%%%%%%%%%%%%%%%%%%%%%%%%%%%%%%%%%%%%%%%%%%%%%%%%%%%%%%%%%%%%%%%%%%%%%%%%%%%%%%%%%%%%%%%%%%%%%%%%%%%%%%%%%%%%%%%%%%%%%%%%%%%%%

\vspace{0.2in}

\section{Lagrangian submanifolds}

Let $\phi:\Sigma^n\rightarrow {\mathbb S}^{n+1}(c)$, be an immersed, orientable hypersurface and $N$ be the unit normal vector field along $\Sigma$. The Gauss map 
\[
\Phi:\Sigma\rightarrow {\mathbb L}({\mathbb S}^{n+1}(c)):x\mapsto\phi(x)\wedge N(x),
\]
defines a Lagrangian immersion in ${\mathbb L}({\mathbb S}^{n+1})(c)$ with respect to the symplectic structure $\Omega$. In the other hand, any Lagrangian immersion in ${\mathbb L}({\mathbb S}^{n+1}(c))$ is locally the Gauss map of a hypersurface in ${\mathbb S}^{n+1}(c))$ and hence, it is immersed by a mapping $\Phi$ (for more details, see Theorem 3 of \cite{An}).

Identifying any vector field $X$ in $\Sigma$ with the derivative $d\phi(X)$, we have
\[
\bar X=d\Phi(X)=X\wedge N+AX\wedge\phi,
\]
where $A$ denotes the shape operator of $\phi$. Let $\nabla$ and $\overline{\nabla}$ be the flat connections of ${\mathbb R}^{n+2}$ and $\Lambda^2{\mathbb R}^{n+2}$ respectively, then we get
\[
\overline{\nabla}_{\bar X}\bar Y=(\nabla_X Y)\wedge N+(\nabla_X AY)\wedge\phi. 
\]
Since the Levi-Civita connection $\nabla$ of $\mathcal{G}$ is the same with $\mathcal{G}_e$, the second fundamental form $\bar h$ of $\Phi$ is is described explicitly by the following tri-symmetric tensor:
\[
\bar h(\bar X,\bar Y,\bar Z)=\mathcal{G}(\mathcal{D}_{\bar X}\bar Y,\mathcal{J}\bar Z).
\]
Let $(e_1,\ldots,e_n)$ be the set of all principal directions of $(\Sigma,\phi^{\ast}g)$ with corresponding principal curvatures $k_i$. That is, $Ae_i=k_ie_i$, where $A$ is the shape operator of $\phi$. If we simply write the induced metric $\Phi^{\ast}\mathcal{G}$ as $\mathcal{G}$ then
\begin{equation}\label{e:inducmet}
\mathcal{G}(\bar e_i,\bar e_j)=2\delta_{ij}k_i.
\end{equation}
Away from flat points, i.e. $\Pi_{k=1}^nk_i\neq 0$, we have
\[
\bar h(e_i,e_j,e_j)=-e_i(k_j),
\]
and therefore, if ${\mathbb H}$ is the mean curvature of $\Phi$, we obtain
 
\[
\mathcal{G}(n{\mathbb H},\mathcal{J}d\Phi(e_i))=\sum_{i=1}^n\frac{\bar h(e_i,e_j,e_j)}{\mathcal{G}(e_j,e_j)}=-\sum_{i=1}^n\frac{e_i(k_j)}{2k_j}=e_i\log|k_1\cdot \ldots \cdot k_n|^{-1/2}.
\]
Hence, 
\[
{\mathbb H}=\frac{1}{n}\mathcal{J}\mathcal{D}\log|k_1\cdot \ldots \cdot k_n|^{-1/2},
\]
and finally we have that
\begin{equation}\label{e:meancurv}
{\mathbb H}=\frac{1}{n}\mathcal{J}\mathcal{D}\log|K|^{-1/2},
\end{equation}
where $K=\Pi_{i=1}k_i$, is the Gaussian curvature of the hypersurface $\phi$.

We thus obtain the following: 

\begin{theo}\label{t:minimal}
Every Lagrangian submanifold in $({\mathbb L}({\mathbb S}^{n+1}(c)),\mathcal{G},\Omega)$ has a Lagrangian angle. If $\Sigma$ is a non-flat hypersurface of ${\mathbb S}^{n+1}(c)$ then it is of constant Gaussian curvature if and only if the Gauss map of $\Sigma$ is a minimal Lagrangian submanifold of $({\mathbb L}({\mathbb S}^{n+1}(c)),\mathcal{G},\Omega)$.
\end{theo}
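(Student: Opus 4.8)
The plan is to read everything off the mean curvature formula (\ref{e:meancurv}), namely ${\mathbb H}=\frac{1}{n}{\mathcal J}{\mathcal D}\log|K|^{-1/2}$, combined with the fact that ${\mathcal J}$ is a \emph{para}-complex structure, so ${\mathcal J}^2=\mathrm{Id}$ and ${\mathcal J}$ is invertible. Throughout I write $f:=\log|K|^{-1/2}=-\tfrac12\log|K|$ for the function on $\Sigma$ appearing there, with ${\mathcal D}f$ its gradient relative to the induced metric ${\mathcal G}$. Both assertions of the theorem are then essentially algebraic consequences of (\ref{e:meancurv}).

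For the first assertion I would compute the Maslov (mean curvature) one-form $\alpha_{\mathbb H}:=\Omega({\mathbb H},\cdot)$ along the Gauss map. Using $\Omega={\mathcal G}({\mathcal J}\cdot,\cdot)$ together with (\ref{e:meancurv}),
\[
\alpha_{\mathbb H}(\bar X)={\mathcal G}({\mathcal J}{\mathbb H},\bar X)=\tfrac1n{\mathcal G}({\mathcal J}^2{\mathcal D}f,\bar X)=\tfrac1n{\mathcal G}({\mathcal D}f,\bar X)=\tfrac1n\,df(\bar X),
\]
so that $\alpha_{\mathbb H}=\tfrac1n\,df$ is \emph{exact}, with potential $\tfrac1n f=-\tfrac{1}{2n}\log|K|$ playing the role of the Lagrangian angle. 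Since, by Theorem 3 of \cite{An}, every Lagrangian immersion into ${\mathbb L}({\mathbb S}^{n+1}(c))$ is locally the Gauss map of a hypersurface, this computation is valid on every coordinate patch; exactness on each patch forces $\alpha_{\mathbb H}$ to be \emph{closed} globally, which is precisely the statement that every Lagrangian submanifold admits a Lagrangian angle.

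For the equivalence I would argue directly from (\ref{e:meancurv}). Because ${\mathcal J}$ is invertible, ${\mathbb H}=0$ holds if and only if ${\mathcal D}f=0$, i.e.\ $df=0$; here the passage from ${\mathcal D}f=0$ to $df=0$ uses that the induced metric ${\mathcal G}(\bar e_i,\bar e_j)=2\delta_{ij}k_i$ of (\ref{e:inducmet}) is non-degenerate exactly away from flat points, which is where the non-flatness hypothesis $\Pi_i k_i\neq0$ is needed. Thus the Gauss map is minimal if and only if $f=-\tfrac12\log|K|$ is locally constant, i.e.\ $|K|$ is constant. Finally, on a connected non-flat hypersurface $K$ is nowhere zero and hence of constant sign, so $|K|$ constant is equivalent to $K$ constant, giving the characterisation by constant Gaussian curvature.

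The genuine content has already been front-loaded into the derivation of (\ref{e:meancurv}), so what remains is mostly bookkeeping; the two points requiring care are (i) the local-to-global step for the closedness of $\alpha_{\mathbb H}$, where one must remember that a general Lagrangian is only \emph{locally} a Gauss map, and (ii) the non-degeneracy of the induced metric, which is exactly what links the hypothesis ``non-flat'' to the implication ${\mathcal D}f=0\Rightarrow f\text{ constant}$. I expect the main (if modest) obstacle to be making (i) precise, since the Lagrangian angle is a priori only a local potential, and one should check that the globally closed form it produces does not depend on the choice of local hypersurface on overlaps.
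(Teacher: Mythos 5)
Your proposal is correct and takes essentially the same route as the paper: the paper derives the mean curvature formula (\ref{e:meancurv}) and states Theorem \ref{t:minimal} as its immediate consequence, and your deduction --- $\alpha_{\mathbb H}=\Omega({\mathbb H},\cdot)=\tfrac{1}{n}\,d\log|K|^{-1/2}$ via ${\mathcal J}^2=\mathrm{Id}$, hence locally exact and therefore closed, and ${\mathbb H}=0$ iff $K$ is constant using the non-degeneracy of the induced metric (\ref{e:inducmet}) away from flat points --- is precisely the bookkeeping the paper leaves implicit. Your two flagged points (closedness being local so the local-Gauss-map description of a general Lagrangian suffices, and the constancy of the sign of $K$ on a connected non-flat hypersurface) are correct refinements of detail, not a different approach.
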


A Lagrangian submanifold $\Sigma$ is said to be {\it Hamiltonian minimal} if 
\[
\frac{d}{dt}\,\mbox{vol}f_t(\Sigma)|_{t=0}=0,
\]
for all Hamiltonian deformations $\{f_t\}$ of $\Sigma$. Using the first variation formula, $\Sigma$ is Hamiltonian minimal if 
\[
\delta a_{H}=0,
\]
where $H$ and $a_H$ are, respectively, the mean curvature vector and the Maslov 1-form, i.e.  $a_H=\Omega(H,.)$ and $\delta$ is the Hodge-dual of $d$. If the ambient manifold admits a (para-) K\"ahler structure $(G,J,\Omega)$ then, Y-G  Oh showed in \cite{Oh1} that a Lagrangian submanifold is Hamiltonian minimal if and only if $div JH=0$, where div is the divergence operator with respect to the induced metric.

 \vspace{0.1in}

\begin{theo}\label{t:hminimal}
Let $\phi:\Sigma\rightarrow {\mathbb S}^{n+1}(c)$ be a non-flat hypersurface in ${\mathbb S}^{n+1}(c)$.  Then the Gauss map $\Phi:\Sigma\rightarrow {\mathbb L}({\mathbb S}^{n+1}(c))$ is a Hamiltonian minimal submanifold of $({\mathbb L}({\mathbb S}^{n+1}(c)),\Omega)$ if and only if $\phi$ is a critical point of the functional
 \[
 \mathcal{F}(\phi)=\int_{\Sigma}\sqrt{|K|}\,dV,
 \]
 where $K$ denotes the Gaussian curvature of $\phi$.
\end{theo}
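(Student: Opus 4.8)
The plan is to reduce the variational problem for $\mathcal{F}$ to the divergence characterisation of Hamiltonian minimality recalled above, by observing that $\mathcal{F}(\phi)$ is, up to a universal constant, exactly the (neutral) volume of the Lagrangian immersion $\Phi$. First I would compare the two volume elements. By (\ref{e:inducmet}), in the orthonormal frame of principal directions $e_1,\dots,e_n$ of $(\Sigma,\phi^{\ast}g)$ the induced metric $\Phi^{\ast}\mathcal{G}$ is diagonal with entries $2k_i$, whereas $\phi^{\ast}g$ is the identity. Hence their volume elements are related by $dV_{\mathcal{G}}=2^{n/2}\sqrt{|k_1\cdots k_n|}\,dV=2^{n/2}\sqrt{|K|}\,dV$, where the absolute value is forced by the neutral signature of $\mathcal{G}$ and where the non-flatness hypothesis $K\neq 0$ guarantees that this frame and the expression $\log|K|$ are well defined. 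Integrating over $\Sigma$ gives $\mathcal{F}(\phi)=2^{-n/2}\operatorname{vol}(\Phi(\Sigma),\mathcal{G})$, so $\mathcal{F}$ is a constant multiple of the volume of the Lagrangian submanifold.

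Next I would exploit the formula (\ref{e:meancurv}). Since the para-complex structure satisfies $\mathcal{J}^2=\mathrm{Id}$, applying $\mathcal{J}$ to (\ref{e:meancurv}) yields $\mathcal{J}\mathbb{H}=\tfrac1n\,\mathcal{D}\log|K|^{-1/2}=\tfrac1n\nabla u$, where $u:=\log|K|^{-1/2}$ and $\nabla u$ is the $\mathcal{G}$-gradient. Consequently $\operatorname{div}(\mathcal{J}\mathbb{H})=\tfrac1n\Delta_{\mathcal{G}}u$, and by Oh's theorem the Gauss map $\Phi$ is Hamiltonian minimal precisely when $u$ is $\mathcal{G}$-harmonic, $\Delta_{\mathcal{G}}u=0$.

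I would then connect this to the criticality of $\mathcal{F}$. Writing $\mathcal{F}(\phi)=2^{-n/2}\operatorname{vol}_{\mathcal{G}}(\Phi)$ and using that, by \cite{glob} and \cite{Palmer}, a one-parameter deformation of $\phi$ induces a Hamiltonian deformation $\dot\Phi=\mathcal{J}\nabla u$ of the Gauss map, the first variation of volume together with the $\mathcal{G}$-antisymmetry $\mathcal{G}(\mathcal{J}\,\cdot\,,\cdot)=-\mathcal{G}(\cdot,\mathcal{J}\,\cdot\,)$ (which follows from the compatibility relation for $\mathcal{J}$ and $\mathcal{J}^2=\mathrm{Id}$) gives, after integration by parts, $\frac{d}{dt}\mathcal{F}\big|_{t=0}=-2^{-n/2}\int_{\Sigma}u\,\operatorname{div}(\mathcal{J}\mathbb{H})\,dV_{\mathcal{G}}$. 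Thus $\phi$ is a critical point of $\mathcal{F}$ exactly when this vanishes for every admissible $u$, which is $\operatorname{div}(\mathcal{J}\mathbb{H})=0$, i.e. the Hamiltonian minimality of $\Phi$; this proves both implications simultaneously.

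The main obstacle I anticipate is the precise matching of the two variational problems: one must verify, combining the deformation correspondence of \cite{glob} and \cite{Palmer} with the local identification of Lagrangian submanifolds as Gauss maps (Section 4), that the Hamiltonian functions $u$ arising from deformations of $\phi$ exhaust a sufficiently rich class (locally, all smooth compactly supported functions), so that vanishing of $\frac{d}{dt}\mathcal{F}|_{t=0}$ for every such deformation forces $\operatorname{div}(\mathcal{J}\mathbb{H})=0$ pointwise; the reverse implication is immediate since Hamiltonian minimality makes each of these first variations vanish. Secondary care is needed in justifying the first variation of volume and the integration by parts for the neutral metric $\mathcal{G}$, and in tracking the constant $2^{-n/2}$ and the sign coming from the antisymmetry of $\mathcal{J}$.
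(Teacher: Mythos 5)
Your proposal is correct, and its backbone is exactly the paper's proof: from (\ref{e:inducmet}) one gets $\Phi_t^{\ast}\mathcal{G}=\mathrm{diag}(2k_1,\ldots,2k_n)$, hence $\mathrm{Vol}(\Phi_t)=\int_{\Sigma}\sqrt{|\det\Phi_t^{\ast}\mathcal{G}|}\,dV=2^{n/2}\mathcal{F}(\phi_t)$ (this is (\ref{e:impeqt})), and this identity combined with the deformation correspondence from \cite{glob}, \cite{Palmer} and \cite{An} gives the theorem. Where you diverge is that the paper stops there: since Hamiltonian minimality \emph{is by definition} criticality of the volume under Hamiltonian deformations, the equality of the two functionals plus the two-way correspondence between deformations of $\phi$ and Hamiltonian deformations of $\Phi$ already yields both implications, with no first-variation computation, no appeal to Oh's divergence criterion, and no use of (\ref{e:meancurv}). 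Your extra layer---applying $\mathcal{J}$ to (\ref{e:meancurv}) to get $\mathcal{J}\mathbb{H}=\tfrac{1}{n}\nabla\log|K|^{-1/2}$, invoking Oh's characterisation $\mathrm{div}(\mathcal{J}\mathbb{H})=0$, and deriving the Euler--Lagrange equation by integration by parts---is correct modulo a dropped factor of $n$ (the first variation gives $-2^{-n/2}n\int_{\Sigma}u\,\mathrm{div}(\mathcal{J}\mathbb{H})\,dV_{\mathcal{G}}$, which is immaterial), but it is logically redundant for the theorem itself; the paper defers precisely this computation to the corollary that follows, where $n\,\mathrm{div}(\mathcal{J}\mathbb{H})=\Delta\log|K|^{-1/2}$ is used to produce the explicit condition (\ref{e:nicecondition}). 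The density issue you flag (whether the Hamiltonian functions $u$ arising from deformations of $\phi$ exhaust, locally, all admissible ones) is settled by the converse statement the paper cites from \cite{An}: every Hamiltonian variation in ${\mathbb L}({\mathbb S}^{n+1}(c))$ is the Gauss map of a smooth variation of the hypersurface in ${\mathbb S}^{n+1}(c)$---and note that on the direct route this converse is needed anyway for the ``critical implies Hamiltonian minimal'' direction, so closing your gap requires nothing beyond what the paper already invokes. In short: same key identity and same citations, with your version buying an explicit Euler--Lagrange equation at the cost of an avoidable detour, and the paper's version buying brevity by arguing at the level of the functionals themselves.
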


\begin{proof}
Let $\Phi$ be the Gauss map of a smooth immersion of $\phi$ of the $n$-dimensional manifold $\Sigma$ in ${\mathbb S}^{n+1}(c)$ and let $(e_1,\ldots,e_n)$ be an orthonormal frame, with respect to the induced metric $\phi^{\ast}g$, such that
\[
Ae_i=k_i e_i, \qquad i=1,\ldots,n,
\]
where $A$ denotes the shape operator of $\phi$.

 Let $(\phi_t)_{t\in (-t_0,t_0)}$ be a smooth variation of $\phi$ and $(\Phi_t)$ be the corresponded variation of the Gauss map $\Phi$. It has been proved in \cite{glob} and \cite{Palmer} that $(\Phi_t)$ is a Hamiltonian variation. Also the converse is true, i.e. Hamiltonian variations in ${\mathbb L}({\mathbb S}^{n+1}(c))$ are the Gauss maps of smooth variations in ${\mathbb S}^{n+1}(c)$ (\cite{An}). We extend all extrinsic geometric quantities such as the shape operator $A$, the principal directions $e_i$ and the principal curvatures $k_i$ to the 1-parameter family of immersions $(\phi_t)$. Using (\ref{e:inducmet}), the induced metric $\Phi_t^{\ast}\mathcal{G}$ is given by 
\[
\Phi^{\ast}_t\mathcal{G}=\mbox{diag}\Big(2k_1,\ldots,2k_n\Big).
\]
For every sufficiently small $t>0$, the volume of every Gauss map $\Phi_t$, with respect to the metric $G$, is
\begin{equation}\label{e:impeqt}
\mathop{\rm Vol}(\Phi_t)=\int_{\Sigma}\sqrt{|\mathop{\rm det}\Phi_t^{\ast}\mathcal{G}|}dV=2^{n/2}{\mathcal F}(\phi_t).
\end{equation}
If $\phi$ is a critical point of the functional ${\mathcal F}$, we have
\[
\partial_t(\mathop{\rm Vol}(\Phi_t))|_{t=0}=0,
\]
for any Hamiltonian variation of $\Phi$. Therefore, $\Phi$ is a Hamiltonian minimal submanifold with respect to the para-K\"ahler structure $({\mathcal G},{\mathcal J})$. The converse follows directly from (\ref{e:impeqt}).

\end{proof}

A consequence of Theorem \ref{t:hminimal} is the following corollary:
\begin{coro}
Suppose that a non-flat submanifold $\Sigma$ in ${\mathbb S}^{n+1}(c)$ satisfies the condition 
\begin{equation}\label{e:nicecondition}
\sum_{i=1}^n e_i\left(\frac{e_i(\sqrt{|K|})}{k_i}\right)=0,
\end{equation}
where $e_i$ are the principal directions with corresponding principal curvatures $k_i$ and $K$ is the Gaussian curvature. Then $\Sigma$ is a critical point of the functional ${\mathcal F}$.
\end{coro}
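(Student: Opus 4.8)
The plan is to derive the corollary directly from Theorem \ref{t:hminimal} together with the characterisation of Hamiltonian minimality recalled just before it. By Theorem \ref{t:hminimal}, $\Sigma$ is a critical point of $\mathcal{F}$ if and only if its Gauss map $\Phi$ is a Hamiltonian minimal submanifold, which by the criterion of Oh is equivalent to $\mathrm{div}(\mathcal{J}{\mathbb H})=0$, the divergence being computed with respect to the induced metric $\Phi^{\ast}\mathcal{G}$. So the first step is to make $\mathcal{J}{\mathbb H}$ explicit. Starting from the mean curvature formula (\ref{e:meancurv}) and using that the almost para-complex structure $\mathcal{J}$ satisfies $\mathcal{J}^2=\mathrm{Id}$, I obtain
\[
\mathcal{J}{\mathbb H}=\tfrac{1}{n}\mathcal{D}\log|K|^{-1/2}.
\]
Expanding this gradient in the principal frame $(e_1,\ldots,e_n)$, in which the induced metric is $\Phi^{\ast}\mathcal{G}=\mathrm{diag}(2k_1,\ldots,2k_n)$ by (\ref{e:inducmet}) and $e_i(\log|K|^{-1/2})=-e_i(\sqrt{|K|})/\sqrt{|K|}$, its $i$-th component equals $-\,e_i(\sqrt{|K|})/(2nk_i\sqrt{|K|})$, so that
\[
\mathcal{J}{\mathbb H}=-\frac{1}{2n\sqrt{|K|}}\sum_{i=1}^{n}\frac{e_i(\sqrt{|K|})}{k_i}\,e_i.
\]

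The second and main step is to compute $\mathrm{div}_{\Phi^{\ast}\mathcal{G}}(\mathcal{J}{\mathbb H})$ and match it to the left-hand side of (\ref{e:nicecondition}). The cleanest route avoids the Christoffel symbols of the neutral metric $\Phi^{\ast}\mathcal{G}$ by transferring everything to the intrinsic metric $\phi^{\ast}g$, for which $(e_i)$ is orthonormal. Since the two metrics are simultaneously diagonalised by the principal frame and their volume elements are related by $dV_{\Phi^{\ast}\mathcal{G}}=2^{n/2}\sqrt{|K|}\,dV_{\phi^{\ast}g}$, integrating $\langle \mathcal{J}{\mathbb H},\mathcal{D}\psi\rangle$ by parts against an arbitrary test function $\psi$ converts the $\Phi^{\ast}\mathcal{G}$-divergence into a $\phi^{\ast}g$-divergence and yields the pointwise identity
\[
\mathrm{div}_{\Phi^{\ast}\mathcal{G}}(\mathcal{J}{\mathbb H})=-\frac{1}{2n\sqrt{|K|}}\,\mathrm{div}_{\phi^{\ast}g}\!\left(\sum_{i=1}^{n}\frac{e_i(\sqrt{|K|})}{k_i}\,e_i\right).
\]
Because $\Sigma$ is non-flat, $\sqrt{|K|}$ never vanishes, so the Hamiltonian minimal equation reduces to the vanishing of the $\phi^{\ast}g$-divergence of the vector field $W=\sum_i \frac{e_i(\sqrt{|K|})}{k_i}e_i$.

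Finally, reading off the divergence of $W$ in the orthonormal principal frame produces exactly $\sum_{i=1}^n e_i\!\big(e_i(\sqrt{|K|})/k_i\big)$, which is the expression appearing in (\ref{e:nicecondition}). Hence condition (\ref{e:nicecondition}) forces $\mathrm{div}(\mathcal{J}{\mathbb H})=0$, so $\Phi$ is Hamiltonian minimal and, by Theorem \ref{t:hminimal}, $\Sigma$ is a critical point of $\mathcal{F}$. I expect the main obstacle to be the divergence computation of the preceding paragraph: one must correctly carry the $\sqrt{|K|}$ weight in the volume element $dV_{\Phi^{\ast}\mathcal{G}}$ through the integration by parts, and then verify that in the principal frame the intrinsic divergence $\mathrm{div}_{\phi^{\ast}g}W$ collapses to precisely the frame-derivative form $\sum_i e_i(\cdot)$ displayed in (\ref{e:nicecondition}), with no surviving connection contributions from $(e_i)$ failing to be a coordinate frame.
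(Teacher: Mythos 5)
Your proposal follows the paper's own proof essentially step for step: the paper likewise invokes Theorem \ref{t:hminimal} to reduce the claim to Hamiltonian minimality of the Gauss map, uses (\ref{e:meancurv}) to write $n\,\mathrm{div}(\mathcal{J}\mathbb{H})=\Delta\log|K|^{-1/2}$, and then asserts the pointwise identity $\Delta\log|K|^{-1/2}=-\tfrac{1}{2\sqrt{|K|}}\sum_{i=1}^n e_i\bigl(e_i(\sqrt{|K|})/k_i\bigr)$, which is exactly the weighted-divergence computation you spell out via the volume ratio $dV_{\Phi^{\ast}\mathcal{G}}=2^{n/2}\sqrt{|K|}\,dV_{\phi^{\ast}g}$. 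The one step you honestly flag as delicate (that $\mathrm{div}_{\phi^{\ast}g}$ of the vector field collapses to the bare frame derivatives $\sum_i e_i(\cdot)$ with no connection contributions from the non-coordinate principal frame) is precisely the step the paper dispatches with ``it is not hard for one to confirm,'' so your write-up is at least as explicit as the published proof and takes the same route.
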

\begin{proof}
Consider the Gauss map $\Phi$ of a hypersurface $\Sigma$ in ${\mathbb S}^{n+1}(c)$ immersed by $\phi$ satisfying (\ref{e:nicecondition}). Using the Theorem \ref{t:hminimal}, we only need to show that $\Phi$ is Hamiltonian minimal. If ${\mathbb H}$ denotes the mean curvature vector field of $\Phi$ then (\ref{e:meancurv}) yields
\[
n\,\mbox{div}({\mathcal J}{\mathbb H})=\Delta\log |K|^{-1/2},
\]
where div and $\Delta$ are respectively the divergence operator and the Laplacian of $\Phi^{\ast}{\mathcal G}$ (which is given by (\ref{e:inducmet})). It is not hard for one to confirm that 
\[
\Delta\log |K|^{-1/2}=-\frac{1}{2\sqrt{|K|}}\sum_{i=1}^ne_i\left(\frac{e_i(\sqrt{|K|})}{k_i}\right),
\]
and the corollary follows.
\end{proof}

\end{document}